\let\ID\inlinedisplay
\def\<{\langle}
\def\>{\rangle}
\def\chv#1{\left\{\,#1\,\right\}}
\def\abs#1{\left\vert #1\right\vert}
\def\norm#1{\left\Vert #1\right\Vert}
\def\prt#1{\left( #1\right)} 
\def\crt#1{\left[ #1\right]} 
\def\piso#1{\left\lfloor \;#1\; \right\rfloor}
\newcommand{\mc}[1]{{\mathcal #1}}
\newcommand{\bb}[1]{{\mathbb #1}}
\newtheorem{theorem}{Theorem}
\newtheorem{prop}{Proposition}
\newtheorem{lemma}{Lemma}
\newtheorem*{teoutro}{Theorem} 
\newtheorem*{prop*}{Proposition}
\newtheorem*{lema*}{Lemma}
\author[Bernardo Costa]{Bernardo F.P. Costa}
\address{\noindent Instituto de Matemática - UFRJ -  Centro de Tecnologia - Bloco C\\
  Av. Athos da Silveira Ramos 149 \\
   Cidade Universitária, Rio de Janeiro - RJ - Brazil}
\email{bernardofpc@im.ufrj.br}
\author{Conrado Costa}
\address{\noindent Mathematical Institute of Leiden University \\
Niels Bohrweg 1, \\
2332 CA, Leiden, Netherlands}
\email{conrado@impa.br}
\author{Milton Jara}
\address{\noindent Instituto de Matem\'atica Pura e Aplicada\\ Estrada Dona Castorina 110\\ 22460-320 Rio De Janeiro, Brazil.}
\email{mjara@impa.br}
\title{Reaction-Diffusion models: From Particle Systems to SDE's}
\date{\today}
\keywords{%
Reaction-Diffusion models,
Scaling limit of particle systems,
Stochastic Differential Equations,
Martingale Problems}
\begin{document}

\begin{abstract}
In this article we present a construction of a family of Reaction-Diffusion models
that converge after scaling to the solution of the following SDE:
\begin{equation*}
  \begin{cases}
    d\zeta_t(x) = \Big[\Delta_V\zeta_t(x) - \beta\prt{\zeta_t(x)}^k\Big] dt + \sqrt{\alpha\prt{\zeta_t(x)}^\ell} \, dB^x_t \quad \forall x \in V\\
    \zeta_0(x) = \rho_0(x)
  \end{cases}
\end{equation*}
where $\alpha,\beta>0$ and $k$, $\ell$ are positive integers.
\end{abstract}

\maketitle


\section{Introduction}

In this paper we explore the use of Reaction-Diffusion models in particles systems
to derive solutions to a family of Stochastic Differential Equations (SDE).
Start with a one-dimensional Ordinary Differential Equation (ODE):
\begin{equation}
  \label{eq:LLN}
  d\zeta_t = F(\zeta_t).
\end{equation}
It is possible to construct a family of particle systems,
with birth and death transitions,
whose densities converge in uniform norm
to the solution of the ODE~\eqref{eq:LLN}.
This type of result is known as a ``Functional Law of Large Numbers''.

Due to the probabilistic nature of the particle systems,
one second step is to consider the fluctuations of the densities around the limit trajectory,
in the spirit of Functional Central Limit Theorems.
Under reasonable assumptions on $F$,
and a proper scaling of time and space,
the fluctuations around a fixed point of equation~\eqref{eq:LLN} are solutions of the following SDE:
\begin{equation}
  \label{eq:CLT}
  d\zeta_t = -\beta(\zeta_t)^k + \sqrt{\alpha\prt{\zeta_t}^{\ell}}\, dB_t
\end{equation}
where $\alpha, \beta >0$ and $k,\ell$ are positive integers satisfying $k \geq \ell$.

Beyond the one-dimensional case, we can apply the same techniques using
particle systems on a finite set $V$ to derive solutions to vector-valued SDE's.
If the number of particles on each site of $V$ evolves independently,
the finite-dimensional case is simply the product of the one-dimensional solutions.
But if we allow particles to jump from any site $x\in V$
to any other site $y\in V$ following a transition kernel $p(x,y)$,
and if we properly scale the jump rates,
the scaling limit of our particle system now solves:
\begin{equation}
  \label{eq:kl}
  d\zeta_t(x) = \prt{\Delta_{V,p} \zeta (x)-\beta(\zeta_t(x))^k}dt  + \sqrt{\alpha\prt{\zeta_t(x)}^{\ell}} \, dB_t^x \qquad \forall x \in V.
\end{equation}
This is a finite-dimensional stochastic reaction-diffusion equation
with diffusion given by $\Delta_{V,p}\zeta(x) = \sum_{y} \prt{p(y,x) \zeta(y) - p(x,y) \zeta(x)}$
and noise given by $\prt{B^x_\cdot}_{x \in V}$ a $|V|$-dimensional Brownian Motion.

This result was established in \cite{Cos},
where the diffusion term was given by the discrete Laplacian on the graph $(V,E)$:
$\Delta_{V}\zeta(x) = \sum_{y\sim x}  \zeta(y) - \zeta(x)$.

\medskip
 
The introduction of properly rescaled jump rates to allow non-trivial solutions
in the finite-dimensional case invites one to change perspective
to a more abstract setting.
Instead of considering space-time scalings to derive solutions to SDE's,
we now tailor-make the jump, birth and death rates
so as to conveniently conserve macroscopic quantities.
This more abstract perspective allows us to derive
solutions to SDE's such as~\eqref{eq:kl}, including the case when $k < l$:
\begin{theorem}
  \label{T:1}
  Given positive reals $\alpha$ and~$\beta$,
  positive integers $k$ and~$\ell$,
  and a transition kernel $p$ on $V \times V$,
  there are Reaction-Diffusion models that generate,
  for every density $\rho_0$ on $\bb{R}^V$,
  a family of stochastic processes $\prt{\eta^n_\cdot}_n$,
  that converges after scaling to the (unique) solution
  $\zeta^*_\cdot$ of the stochastic differential equation~\eqref{eq:kl}
  with initial condition $\zeta_\cdot = \rho_0$.
\end{theorem}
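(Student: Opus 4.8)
The plan is to realize each $\eta^n_\cdot$ as a continuous-time Markov jump process on $\bb{N}^V$, recording the number of particles at every site, governed by three families of transitions: a particle at $x$ hops to $y$ at rate $p(x,y)$ (diffusion), and at a site holding $j$ particles a birth and a death occur at rates $b_n(j)$ and $d_n(j)$ (reaction). From its generator I would extract a semimartingale decomposition for the rescaled density $\zeta^n_\cdot(x)=\eta^n_\cdot(x)/n$ and then run the classical programme: (i) read off the drift and predictable quadratic variation, (ii) prove tightness of $\prt{\zeta^n_\cdot}_n$ in the Skorokhod space $D\prt{[0,\infty),\bb{R}^V}$, and (iii) identify every limit point as a solution of the martingale problem attached to~\eqref{eq:kl}, upgrading subsequential convergence to convergence of the full family through uniqueness of that solution.

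First I would pin down the reaction rates. Dynkin's formula applied to the coordinate $f(\eta)=\eta(x)/n$ shows that the local drift of $\zeta^n(x)$ produced by births and deaths is $\prt{b_n(j)-d_n(j)}/n$, while each birth or death moves $\zeta^n(x)$ by $\pm 1/n$, so the rate of quadratic variation it contributes is $\prt{b_n(j)+d_n(j)}/n^2$. Matching these to the drift $-\beta\zeta^k$ and to the diffusion coefficient $\alpha\zeta^\ell$ of~\eqref{eq:kl}, with $j=n\zeta$, forces
\begin{equation*}
  b_n(j)-d_n(j) = -\beta\, n^{1-k} j^k, \qquad b_n(j)+d_n(j) = \alpha\, n^{2-\ell} j^\ell,
\end{equation*}
and I would take a regularized version of this pair as the definition of the model. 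Note that both reaction rates are of order $n^{2-\ell}j^\ell\sim n^2\zeta^\ell$, so births and deaths nearly cancel, and it is only their imbalance, of order $n$, that yields the macroscopic drift.

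For the identification I would apply the generator to the coordinates and to their squares --- equivalently, use It\^o's formula for pure-jump semimartingales --- to write $\zeta^n_\cdot(x)=\zeta^n_0(x)+\int_0^\cdot\prt{\Delta_{V,p}\zeta^n_s(x)-\beta\prt{\zeta^n_s(x)}^k}ds+M^{n,x}_\cdot$. The hopping part reproduces $\Delta_{V,p}\zeta^n$ exactly as a finite sum and passes to the limit; crucially its own fluctuations, like the cross-site brackets, carry a factor $1/n$ and vanish, so the only surviving bracket is the diagonal $\langle M^{n,x}\rangle_\cdot\to\int_0^\cdot\alpha\prt{\zeta_s(x)}^\ell\,ds$. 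Tightness would follow from uniform bracket and drift bounds through an Aldous-type criterion, and the martingale central limit theorem would then exhibit each limit martingale as $\int_0^\cdot\sqrt{\alpha\prt{\zeta_s(x)}^\ell}\,dB^x_s$ for a family of independent Brownian motions, so that any limit point solves~\eqref{eq:kl}.

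The main obstacle is twofold. First, the displayed identities need not produce admissible rates: solving them gives $b_n(j)=\tfrac12\prt{\alpha n^{2-\ell}j^\ell-\beta n^{1-k}j^k}$, which is negative for small $j$, while for $\ell\ge2$ the birth rate grows like $j^\ell$ and threatens explosion of the finite-$n$ chain. Reconciling the correct macroscopic asymptotics with genuine, non-explosive rates --- regularizing near the origin and for large occupation while conserving the relevant macroscopic quantities --- is the delicate construction hinted at in the introduction, and it is exactly what opens up the otherwise problematic regime $k<\ell$. Second, well-posedness of~\eqref{eq:kl} is non-standard: the diffusion coefficient $\sqrt{\alpha\zeta^\ell}=\sqrt{\alpha}\,\zeta^{\ell/2}$ is locally Lipschitz for $\ell\ge2$, where classical theory applies, but the borderline case $\ell=1$ gives only H\"older continuity of exponent $1/2$ at the origin and demands a Yamada--Watanabe argument for pathwise uniqueness, supplemented by a boundary analysis ensuring that the solution stays in $[0,\infty)^V$ and does not explode. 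Securing this uniqueness is what turns the subsequential limits into the single process $\zeta^*_\cdot$.
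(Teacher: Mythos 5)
Your proposal is correct and follows essentially the same route as the paper: the same calibration of birth/death rates via the pair of equations for $F_n^+ \pm F_n^-$ (regularized by truncating the birth rate at zero, which is precisely what admits the case $k<\ell$), tightness via Aldous's criterion, identification of limit points through the martingale problem for the coordinate functions and their products, and uniqueness split between the locally Lipschitz case $\ell\geq 2$ and the Yamada--Watanabe argument for $\ell=1$. The one ingredient you flag but do not supply --- non-explosion despite reaction rates of order $n^2\zeta^\ell$ --- is resolved in the paper by observing that $F_n^- \geq F_n^+$ always, so the total mass is stochastically dominated by a symmetric random walk.
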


Some particular cases of equation \eqref{eq:kl} are of special interest,
due to their connection to stochastic partial differential equations.
The case $\beta =0$, $\ell =2$ corresponds to a {\em parabolic Anderson model} with space-time
noise. In the parabolic Anderson model, $\zeta_t(x)$ corresponds to the expected
number of particles of a system of independent random walks which split with rate
$(dB_t^x)^+$ and annihilated with rate $(dB_t^x)^-$. Although these rates are not
well defined, they make sense in a weak sense using It\^o's integral, see
\cite{ErhdHolMai} for a recent reference. In that article, equation \eqref{eq:kl}
is interpreted as a {\em catalytic} equation: the noises $B_t^x$ act as an
independent catalyst which favours of penalizes the reproduction of the
individuals. In our case, we obtain the same equation as the scaling limit around
an equilibrium point of a {\em self-catalytic} system, on which the reproduction
of individuals is changed by the presence or absence of other individuals. In the
one-dimensional lattice, this model has striking combinatorial structure
\cite{BorCor}, which allows to study various correlation functions in terms of
contour integrals.

The case $\beta =0$, $\ell=1$ corresponds to a discrete version of {\em Dynkin's super-Brownian motion}, see \cite{LeGall}. On these two cases, the process \eqref{eq:kl} appears as the random density of an underlying Markov process. From the point of view of the models discussed here, in both situations the case $\beta > 0$, $k =\ell$ seems to be natural. It remains an open question to understand these case in terms of densities of associated models.

\bigskip

This paper will proceed in three steps.
We first present, in the one-dimensional case,
the main objects we will be dealing with,
and the scalings required to obtain~\eqref{eq:CLT}.
Then, we outline the strategy of the proof on the general case,
reviewing the martingale approach of Strook and Varadhan.
Finally, the remaining sections complete the details of our argument.

\section{Overview for the one-dimensional case}
\label{sec:1d}

Here, we focus on the one-dimensional case
to concentrate on the scaling requirement of the birth and death rates
in order to observe fluctuations of the solutions.

\subsection{Physical background}
Reaction-diffusion equations have been used to model a large class of
real-world phenomena,
such as population dynamics~\cite{CanCos} or
pattern formation and morphogenesis~\cite{V}.
From a probabilistic setting,
there are convergence results for models of chemical reactions~\cite{Bl92,AT},
and more particularly in the particle system setting,
for tunnelling and metastability~\cite{FarLanTsu,LanTsu}.

The physical image we have in mind for scaling limits is as follows:
we consider that reaction is taking place on a given region
and we are able to detect ``particles'' with mass $1/m$.
At level $m$, we detect $\piso{m\zeta}$ particles with our instrument,
and observe creation and annihilation also in this scale.
The net change of mass on a small time interval
is a function of the total mass in the region (say $\zeta$),
and is independent of the measuring device.
So, if (at precision $m$) we observe $i$ particles,
the transition rates should be given by:
\begin{equation}\label{rates}
r_m(i,i+1) = mF_+\big( \tfrac{i}{m}), \quad r_m(i,i-1) = m F_-\big(\tfrac{i}{m}\big).
\end{equation}

In this way, the total mass change on the region is given by
$F(x) = F_+(x) -  F_-(x)$,
where  $F_+,F_-$ are the functions ruling creation and annihilation of particles
when evaluated at the total mass.

\subsection{Functional Law of Large Numbers}
Now, take $F$ a $C^\infty(\bb{R}_+)$ function,
and decompose it as $F = F_+ - F_-$, where $F_+, F_-$ are non-decreasing $C^\infty$ functions.
The modeling above indicates a way to approximate solutions of the ODE~\eqref{eq:LLN}
by means of a stochastic process.
Interpreting the rates in~\eqref{rates} as creation and annihilation of particles,
we obtain birth-and-death chains $\{\eta^m_t; t \geq 0\}_{m \in \bb N}$ in $\bb N$.
The continuous-time rates $F_+$ and $F_-$
are known as the {\em reaction} rates.

In order to avoid explosions in finite time,
we impose a growth condition on $F_+$;
for simplicity we assume that $F_+(\zeta) \leq C(1+\zeta)$ for some finite constant $C$.
With these definitions, it turns out that $\frac{\eta^m_t}{m}$
converges to the solutions of \eqref{eq:LLN}.
Let's sketch the argument.

First we consider the generator:
\begin{equation}
\label{eq:L0}
L_mf(\eta) =  mF_+\big( \tfrac{\eta}{m}) \crt{f(\eta + 1) - f(\eta)} +  m F_-\big(\tfrac{\eta}{m}\big)\crt{f(\eta -1) -f(\eta)}.
\end{equation}
and apply it to $f_m(\eta) = \tfrac{\eta}{m} =: \zeta^m$.
This yields $L_m\zeta^m = F_+(\zeta^m) - F_-(\zeta^m) = F(\zeta^m)$,
where we slightly abuse notation writing $L_m\zeta^m$ instead of $[L_mf_m](\eta^m)$.
The Dynkin martingale for $\zeta^m$ and $L_m$ is (see~\cite[Theorem 3.32]{Lig}):
\begin{equation}
  \label{eq:dm_1}
  M^m_t = \zeta^m_t - \zeta^m_0 - \int_0^t F(\zeta^m_s)\, ds.
\end{equation}
and the variance of $M^m_t$ is given by $\<M^m\>_t = \int_0^t Q_m(\zeta^m_s)\, ds$,
where in general
\[
Q_mf(\eta) = mF_+\big( \tfrac{\eta}{m}) \crt{f(\eta + 1) - f(\eta)}^2 +  m F_-\big(\tfrac{\eta}{m}\big)\crt{f(\eta -1) -f(\eta)}^2.
\]
Here, $Q_m(\zeta^m_s) = \frac{1}{m} \prt{F_+(\zeta^m) + F_-(\zeta^m)}$.

\medskip

Then, we prove tightness of $\zeta^m$,
which allows us to consider convergent subsequences of $\zeta^m$.
Take one converging in the uniform topology to $\zeta^*$,
a random variable on $D([0,\infty), \bb R)$ with continuous paths.
Since $Q_m(\zeta^{m}_s) \to 0$, the martingales $M^m_s$ converge to zero uniformly in compact time intervals.
Together with $F(\zeta^{m}_s)\to F(\zeta^*_s)$ also uniformly,
taking limits on the Dynkin formula~\eqref{eq:dm_1} we obtain:
\[
  \zeta^*_t = \zeta^*_0 + \int_0^t F(\zeta^*_s)\, ds
\]

Therefore $\zeta^*$ solves~\eqref{eq:LLN},
and since this equation admits a unique solution $\phi(\cdot)$ starting from $x_0$,
if we take initial conditions such that $\zeta^m_0 \to x_0$
then the family of probability measures induced by $\zeta^m$ converge to $\delta_\phi$.
This means that we can approximate solutions to~\eqref{eq:LLN} using a birth and death chain.

\subsection{Fluctuations}
Now, if $F$ has a zero at $x_0$, then $\phi(t) = x_0$ for any $t \geq 0$,
and this limit theorem is not very informative.
It makes sense in that case to perform a different scaling on $\{\eta^m_t; t \geq 0\}$
in order to capture a more informative limiting behaviour.
Without loss of generality, we can assume $x_0=0$.
The quantity of interest now is $\zeta_t^m = \eta^m_{tm^b}/m^a$,
where $a$ and $b$ are parameters to be properly set.

To understand the limiting behaviour of $\zeta_t^m$,
we study the behaviour around $\zeta=0$ of the smooth functions
$F(\zeta) = F_+(\zeta) - F_-(\zeta)$ and
$G(\zeta) = F_+(\zeta) + F_-(\zeta)$.
This is determined by their first non-zero derivative at $\zeta = 0$.
Let $k$, $\ell$ be such that
\[
F^{(k)}(0) \neq 0 \text{ and } F^{(i)}(0)=0 \text{ for any } i <k,
\]
\[
G^{(\ell)}(0) \neq 0 \text{ and } G^{(i)}(0)=0 \text{ for any } i <\ell.
\]
Since $F_+$ and $F_-$ are non-decreasing functions, we have $k \geq \ell$.

From the accelerated processes $\eta^m_{tm^b}$, we get a new family of generators $\mc{L}_m$,
satisfying $\mc{L}_m f(\eta) = m^b L_m f(\eta)$.
Consider the functions $f_m(\eta) = \tfrac{\eta}{m^a} =\zeta^m$ and compute $\mc{L}_m \zeta^m$
\begin{align*}
  \mc{L}_m\zeta^m & = m^{b+1 -a} \crt{F_+(m^{a-1}\zeta^m) -F_-(m^{a-1}\zeta^m)} = m^{b+1 -a}F(m^{a-1}\zeta^m) \\
                  & = m^{b+1 -a} \crt{-\beta(m^{a-1}\zeta^m)^k + m^{(a-1)k+1}O((\zeta^m)^{k+1})}
\end{align*}
We still have a Dynkin martingale for $\zeta^m$ and $\mc{L}_m$:
\begin{equation}
  \label{eq:dm_rescaled}
  M^m_t = \zeta^m_t - \zeta^m_0 - \int_0^t \mc{L}_m\zeta^m_s\, ds
\end{equation}
with variance given by:
\[
\<M^m\>_t = \int_0^t \mc{Q}_m(\zeta^m_s)\, ds
\]
where
\begin{align*}
  \mc{Q}_m\zeta^m & = m^{b+1 -2a} \crt{F_+(m^{a-1}\zeta^m) + F_-\big(m^{a-1}\zeta^m)} = m^{b+1 - 2a}G(m^{a-1}\zeta^m) \\
                  & = m^{b+1 -2a} \crt{\alpha(m^{a-1}\zeta^m)^\ell + m^{(a-1)\ell+1}O((\zeta^m)^{l+1})}.
\end{align*}

To obtain non trivial terms in the integrands above, we need to set $a$ and~$b$ such that
\begin{align*}
  b+1 -  a -    k(1-a) & = 0 \\
  b+1 - 2a - \ell(1-a) & = 0.
\end{align*}
which in turn give $a = 1-\frac{1}{1+k-\ell}$ and $b = \frac{k-1}{1+k\ell}$.
Note that since $a<1$, we are zooming-in $\frac{\eta^m_t}{m}$;
and since $a>0$ we can still expect to observe a continuous process in the limit.
Under these choices of parameters, we see that $\mc{L}_m\zeta^m \to -\beta(\zeta^m)^k$
and $\mc{Q}_m\zeta^m \to \alpha(\zeta^m)^l$, uniformly if $\zeta$ is bounded.
As previously, we use tightness and pass to a convergent subsequence.

Now the Dynkin martingales $M^m_t$ do not vanish in the limit.
However, if we prove that $M^m$ is uniformly integrable,
the convergence of $\zeta^m$ to $\zeta^*$ implies the convergence of
$M^m$ to $M^*$, a martingale with quadratic variation given by
\[
  \<M^*\>_t = \int_0^t \alpha (\zeta^*_s)^l\,ds.
\]
By the Martingale Representation Theorem
(see discussion on section~\ref{sec:strategyofproof})
we can write on a convenient probability space that
$M^*_t = \int_0^t\sqrt{\alpha (\zeta^*_s)^l} \, dB_s$
where $B_\cdot$ is a Brownian motion.
Now, we can finally take limits on~\eqref{eq:dm_rescaled} and obtain:
\[
  \zeta^*_t = \zeta^*_0 + \int_0^t -\beta(\zeta^*_s)^k\, ds + \int_0^t\sqrt{\alpha (\zeta^*_s)^l} \, dB_s.
\]

If we take initial conditions $\eta^m_0$ such that $\zeta^m_0 \to \rho_0$,
$\zeta^*$ solves \eqref{eq:CLT} with initial condition $\rho_0$.
Since this equation corresponds to a unique law $P^*$
(depending also on $\alpha$, $\beta$, $k$ and~$l$)
the family of probability measures induced by $\zeta^m$ converge to $P^*$.
This means that we can approximate solutions to~\eqref{eq:CLT}
after properly scaling time and space on a birth and death chain.

\section{Strategy of the proof}
\label{sec:strategyofproof}

The proof of our main result follows the Stroock-Varadhan martingale method
to prove convergence of Markov processes to diffusions.
Although very popular in scaling limits of interacting particle systems (see~\cite{KL,KomLanOll}),
the martingale method is less employed in the context of heavy-traffic limits of queuing networks,
see~\cite{PanTalWhi} for a more detailed discussion.

First, we construct in subsection~\ref{ssec:construction}
a family of particle systems $\eta^n$ and from it
the corresponding scaled processes $\zeta^n$ with initial condition $\zeta^n_0$.
On the one hand, we depart from the physical point of view of the previous section
in favor of an approach that builds a particle system
so as to obtain in the limit the desired convergence to a given SDE.
On the other hand, the structure of the proof remains the same,
and we devote this section to presenting in more detail
how scaling limits of particle systems
give rise to solutions of Martingale Problems,
and how those correspond to solutions of SDE's.

\medskip

Say we have a family of processes $\zeta^n_\cdot$, issued from particle systems,
with infinitesimal generators $\mc{L}_n$
such that for every $f \in C^2(\bb{R}^d)$ and $\zeta \in \bb{R}^d$
\[
  \mc{L}_n f (\zeta) \to \mc{L}_*f(\zeta)
\]
uniformly in compact sets,
where $\mc{L}_*$ is a second-order elliptic differential operator.

Assume for simplicity that $\zeta^n_\cdot$ converges to $\zeta^*_\cdot$ in law.
So, for $f \in C^2(\bb{R}^d)$
the Dynkin martingales $M^{f,\mc{L}_n}_t$ converge to a limit process $M^{f,*}_t$:
\begin{align*}
         M^{f,*}_t & = f(\zeta^*_t) -  f(\zeta^*_0) - \int_0^t \mc{L}_* f(\zeta^*_s) \, ds.
\end{align*}
To establish that $\zeta^*_\cdot$ is a solution to a Martingale Problem associated to $\mc{L}_*$,
we need to show that $M^{f,*}_t$ are indeed local martingales
for the coordinate functions $f_i(\zeta) = \zeta(i)$
and $f_{i,j}(\zeta) = \zeta(i)\zeta(j)$ ~\cite[pp. 315--316]{KS}.

Therefore we introduce stopping times
$\tau^n_A$ and~$\tau^*_A$ such that $\tau^n_A \to \tau^*_A$
and the family $M^{f_i,\mc{L}_n}_{t\wedge \tau^n_A}$ is uniformly integrable.
With the fact that the limit $\zeta^*_\cdot$ is almost surely continuous,
we deduce that $M^{f_i,*}_\cdot$ are continuous local martingales.
By a similar argument with the functions $f_{i,j}$,
we learn about the covariance of $M^{f_i,*}$ and $M^{f_j,*}$.

Now, by the Martingale Representation theorem~\cite[pp. 170--172]{KS},
we obtain a probability space where $\zeta^*$ and a Brownian motion $B_\cdot$ are defined,
and a matrix-valued function $\sigma^*_{i,j}$ such that
\[
   M^{f_i,*}_t = \sum_{j = 1}^d\int_0^t \sigma_{i,j}^*(\zeta^*_s) \, dB^j_s,
\]
fulfilling the requirements for a weak solution of the Martingale Problem~\cite[p. 305]{KS}.

In particular, rearranging the martingales $M^{f_i,*}$,
we get almost surely
\[
\zeta^*_t(i) =  \zeta^*_0(i) +\int_0^t b_i(\zeta^*_s) \, ds +  \sum_{j = 1}^d\int_0^t \sigma_{i,j}^*(\zeta^*_s) \, dB^j_s \quad
\forall t \geq 0, \; \forall \, 1 \leq i \leq d .
\]
If the initial condition $\zeta^n_0 \to \rho_0 $,
this shows that $\zeta^*_\cdot$ is a weak solution to the vector-valued SDE
\[
\begin{cases}
  d\zeta^*_t = b^*(\zeta^*_t)\, dt + \sigma^*(\zeta^*_t)\, dB_t\\
  \zeta^*_0 = \rho_0.
\end{cases}
\]

In our case, we use coordinates $x, y, \ldots \in V$
instead of $i \in \{\, 1, \ldots, d \,\}$.
So, the term  $b_x^*(\zeta)$ corresponds to
$\mc{L}_* \zeta(x) =  \Big[\Delta_{V,p}\zeta(x) - \beta\prt{\zeta(x)}^k\Big]$.
The matrix $\sigma$ is a square root of the quadratic variation of the local Martingales $M^{f_x,*}_t$,
and $\sigma_{x,y}(\zeta) = \delta_{x,y}\sqrt{\alpha\prt{\zeta(x)}^l}$.
This yields equation~\eqref{eq:kl}.


Now that we've presented the relation between the major concepts in our proof,
we can portray them in the following diagram:

\begin{center}
\begin{tikzpicture}
[->,>=stealth',
,auto,node distance=.2\textwidth,
  thick,bolinha/.style={rectangle,rounded corners,draw=black,very thick,text centered,text width=5.5em,minimum height=2em,fill=white},
  limitf/.style={dashed},
  virtual/.style={dotted},
]

  \draw[fill=red!50] (4,6) -- (0,4.5) -- (8,4.5) -- cycle ;
  \node[bolinha] at (4,6) (0)  {Generators $\mc{L}_{n}$};
  \node[bolinha] at (0,4.5) (1)  {Processes $\zeta^{n}$} ;
  \node[bolinha] at (8,4.5) (2)  {Dynkin Martingales $M^{f,\mc{L}_n}$};

  \draw[color=white,fill=red!50] (4,2.5) -- (0,1) -- (8,1) -- cycle;
  \node[bolinha] at (4,2.5) (Ls) {``Generator'' $\mc{L}_*$};
  \node[bolinha] at (0,1) (3)  {Process $\zeta^{*}$} ;
  \node[bolinha] at (8,1) (4)  {Limit Martingales \\ $M^{f,*}$};

  \node[circle,draw=black,very thick]  at (4,-0.5) (5)  {SDE};
  \node[circle,draw=black,very thick]  at (4, 8 ) (6)  {SDE};

  \path[every node/.style={font=\sffamily\small}]
    (0) edge node  [ left]{}  (1)
    (0) edge node  [ left]{}  (2)
    (1) edge node  [above,text=white]{Particle \ systems {}}  (2)
   (0) edge[limitf] node [ left, near end]{Rescaling} (Ls)
   (1) edge[limitf] node [ left]{Tightness}  (3)
   (2) edge[limitf] node [right]{stopping times}  (4)
   (Ls) edge[virtual] node [ left]{} (3)
   (Ls) edge[virtual] node [ left]{} (4)
   (3) edge node  [above,text=white]{Martingale Problem} (4)
   (5) edge node  [ left, very near start]{uniqueness \ \ {} }(3);

  \draw[double,thick,double equal sign distance,-implies] (4,0.9) -- (4,0.2);
  \draw[double,thick,double equal sign distance,-implies] (4,7.4) -- (4,6.6);

\end{tikzpicture}
\end{center}

Section~\ref{ssec:discretemodels} is devoted to the study of the particle systems,
from the construction of the processes $\eta^n_\cdot$ to the analysis of the scaled $\zeta^n_\cdot = \frac{\eta^n_\cdot}{n}$
and their associated Dynkin martingales.

Section~\ref{sec:limitprocess} deals with the convergence of the scaled processes to the solution of the SDE.
First, we prove that the family of processes $\prt{\zeta^n_\cdot}_n$ is tight,
then we establish the conditions necessary for the limit points to solve SDE~\eqref{eq:kl},
using the equivalence of Martingale problems and SDE's.
Finally, we show that the limit point is unique.

\section{The discrete models}
\label{ssec:discretemodels}
In this Section, we complete the first part of the scheme of the proof presented in Section~\ref{sec:strategyofproof}.
We begin with a discussion on how to build general reaction-diffusion models.
Then, inspired by a fixed SDE from~\eqref{eq:kl},
we construct a family of particle systems that, after scaling, one hopes will converge to a solution to the given SDE.
With the transition rates in hand, we prove that the processes are non-exploding, which completes their construction.
Since the proof gives uniform non-explosion, it prepares for the tightness arguments.
Finally, we study the Dynkin Martingales with respect to the coordinate functions,
and introduce the notion of ``discrete coefficients'' for $L_n$.

To sum up, the purpose of this Section is to prepare the key ingredients for
the passage to the limit that takes place in Section~\ref{sec:limitprocess}.

\subsection{Setup}
\label{ssec:setup}

Fix a finite set $V$.
The points of this set, denoted by $x$, $y$, $z$, $\ldots$ are called \emph{sites}.
We denote by $\eta(x)$ the number of particles at site $x$
and by $\eta = \prt{\eta(x)}_{x \in V}$ the configuration of the system.

\medskip

For the dynamics,
there are 3 types of transition that occur independently of one another and at random times.
Particles on site $x$ may jump to site $y$
according to exponential times with rate $p(x,y)$.
Moreover, at each site $x$, a particle can be created with rate $F^+(\eta(x))$
and annihilated with rate $F^-(\eta(x))$.
We adopt the usual notation for the resulting configuration after each transition:
$\eta^{x,y}$, $\eta^{x,+}$ and $\eta^{x,-}$ respectively.

Our processes will be well-defined once we rule out explosions,
which occur when, in a finite time, an infinite number of transitions take place.
In our case, this can only happen if the total number of particles reaches $+\infty$ in a finite time.
As we'll see in subsection~\ref{sec:nonexp}, the probability of such an event is zero.

The above description corresponds to a Markov Process with denumerable state space,
see~\cite[Chapter 1]{Lannotes} for a canonical construction of such processes.
In our case, starting from each configuration $\eta$,
we obtain a probability measure $\bb{P}^\eta$;
the corresponding expectation we denote by $\bb{E}^\eta$.

At this point, we define a formal generator from pointwise convergence
\begin{equation}
\mc{L}f(\eta) = \lim_{h\downarrow 0} \frac{1}{h} \bb{E}^\eta\crt{f(X_h) - f(X_0) }\label{e:DL}.
\end{equation}
If, for a given function $f$, for every $\eta\in \bb{R}^V$ the above limit exists and
\begin{equation}
\label{eq:dynkin}
  M^{f}_t = f(\eta_t) - f(\eta_0) - \int_0^t \mc{L} f(\eta_s)\, ds.
\end{equation}
are local martingales, $f$ \emph{belongs to the domain of $\mc{L}$}.

From our dynamics, we have:
\begin{align*}
\mc{L}f(\eta) = \sum_{x,y \in V} \eta(x)p(x,y)\crt{f(\eta^{x,y}) - f(\eta)}
  & + \sum_{x \in V} F^+(\eta(x)) \crt{f(\eta^{x,+}) - f(\eta)} \\
  & + \sum_{x \in V} F^-(\eta(x)) \crt{f(\eta^{x,-}) - f(\eta)}.
\end{align*}
Transition rates can be read from the formal generator
as the factors multiplying the differences
between the transitions $f(\eta^\bullet)$ and the original state $f(\eta)$.
Note that since the particles at a given site $x$ move independently,
the transition rate for jumps from a particle from $x$ to $y$ is multiplied by $\eta(x)$.

\subsection{Construction of the models}
\label{ssec:construction}

Given parameters $\alpha,\beta,k,l$, we will construct a family of Reaction-Diffusion models on the finite set $V$,
from which we will derive a family of stochastic processes $\prt{\eta^n_\cdot = \chv{\eta^n_t; t \geq 0}}_{n \in \bb{N}}$.
The goal is to show that this family converges after scaling to the solution of \eqref{eq:kl}.

The reaction-diffusion models can be described by their formal generators $L_n$:
\begin{align}
L_nf(\eta) = \sum_{x, y \in V} \eta(x)p(x,y)\crt{f(\eta^{x,y}) - f(\eta)} & + \sum_{x \in V} F_n^{+}(\eta(x)) \crt{f(\eta^{x,+}) - f(\eta)}\nonumber \\
          & + \sum_{x \in V} F_n^{-}(\eta(x)) \crt{f(\eta^{x,-}) - f(\eta)}
\label{gen}
\end{align}
where the rates $F_n^+(\eta(x))$ and $F_n^-(\eta(x))$ now depend on $n$ and will be chosen to match the desired asymptotic evolution of the ``total mass'' of particles given by the functions
$\ID \zeta^n_t(x) = \eta^n_t(x)/n$.
\medskip

To construct $F_n^+$ and~$F_n^-$,
we will study the one-dimensional case where $V = \chv{x}$.
Here, no diffusion occurs and moreover we simplify the notation writing $\zeta^n = \eta(x)/n$.
Now, instead of changing time and space scales as in section~\ref{sec:1d},
we will choose $F^+$ and $F^-$ with more freedom,
bearing in mind the asymptotic desired limit.

Consider the evolution of the macroscopic quantity $\zeta^n$ due to birth and death of particles.
It is characterized by its infinitesimal increments, in mean and variance.
These are, respectively, given by the two following functions, as we will see in~\ref{ssec:dynkin}:
\begin{equation}
  \begin{cases}
    F_n(n\zeta^n) &= \frac{1}{n}   \crt{ F_n^{+}(n\zeta^n) -  F_n^{-}(n\zeta^n)} \\
    G_n(n\zeta^n) &= \frac{1}{n^2} \crt{ F_n^{+}(n\zeta^n) +  F_n^{-}(n\zeta^n)}
  \end{cases}
\label{relquant}
\end{equation}

With SDE~\eqref{eq:kl} in mind, we would like to set $F_n(n \zeta^n)$ and $G_n(n \zeta^n)$ to:
\begin{equation}
  \begin{cases}
    \frac{1}{n}   \crt{ F_n^{+}(n\zeta^n) -  F_n^{-}(n\zeta^n)}  &=   -\beta\prt{\zeta^n}^k \\
     \frac{1}{n^2} \crt{ F_n^{+}(n\zeta^n) +  F_n^{-}(n\zeta^n)} &= \alpha\prt{\zeta^n}^l 
  \end{cases}
\label{e:systemofequations}
\end{equation}
which is enough to determine $F_n^+$ and $F_n^-$ and therefore the generator $L_n$.
However, this would lead to
\[
2 F_n^+(n \zeta^n) = n^2 \alpha \prt{\zeta^n}^l - n\beta\prt{\zeta^n}^k
\]
which is negative for some values of $\zeta^n$. But since  transition rates must be positive we  correct this by defining:
\[
2 F_n^+(n \zeta^n) = \max\chv{n^2 \alpha \prt{\zeta^n}^l - n\beta\prt{\zeta^n}^k, 0}.
\]
Keeping the second equation of~\eqref{e:systemofequations} unchanged, we have:
\[
F_n^-(n \zeta^n) = n^2 \alpha \prt{\zeta^n}^l - F_n^+(n \zeta^n).
\]

This adjustment in $F_n^+$ forces a change to the first equation of~\eqref{e:systemofequations}, introducing an error term:
\begin{equation}
  \begin{cases}
    F_n(\zeta^n) = \frac{1}{n}   \crt{ F_n^{+}(n\zeta^n) -  F_n^{-}(n\zeta^n)} & =   -\beta\prt{\zeta^n}^k  + \text{error}_n(\zeta^n) \\
    G_n(\zeta^n) = \frac{1}{n^2} \crt{ F_n^{+}(n\zeta^n) +  F_n^{-}(n\zeta^n)} &= \alpha\prt{\zeta^n}^l 
  \end{cases}
\label{e:correctedsystemofequations}
\end{equation}
The error term appears when
$n^2 \alpha \prt{\zeta^n}^l - n\beta\prt{\zeta^n}^k < 0$.
Explicitly, when
\[
\def\bph{\vphantom{\beta}}
  \begin{cases}
   \zeta^n < \prt{\frac{\beta}{n \alpha\bph}}^{1/(l-k)} & \text{, for } k<l ;\\
   \ 1\, > \ \frac{\beta}{n \alpha\bph} & \text{, for } l=k ;\\
   \zeta^n > \prt{\frac{n \alpha}{\beta}}^{1/(k-l)} & \text{, for } k>l.\\
  \end{cases}
\]

To study our SDE, we need only the limit behaviour of the particle systems as $n\to\infty$.
Therefore, in the case $k=l$, it is enough to take $n > \beta/\alpha$ and there will be no need for correction.
When $k<l$, the error term tends uniformly (in $\zeta$) to zero as $n\to\infty$.
Finally, when $k>l$
the error term appears in a neighborhood of $+\infty$.
In this case, it is also true that the error disappears uniformly
as long as $\zeta^n$ remains in a bounded region as $n \to \infty$.

In the three cases above, from the first equation in \eqref{e:correctedsystemofequations} we have:
\begin{equation}
  \forall A > 0, \quad \sup_{\zeta \leq A} \norm{F_n(\zeta) - (- \beta\zeta^k )} \xrightarrow[n\to\infty]{} 0.
  \label{eq:errorterm}
\end{equation}

\medskip

In the general case of finitely many sites, we define the birth and death rates $F_n^+$ and $F_n^-$
by the same formula we obtained above.

\bigskip

This shows how the parameters $\alpha$, $\beta$, $k$ and~$l$ determine the discrete models (that is, the transition rates)
compatible with the convergence of the macroscopic quantities to the solution of the SDE.

Therefore, for each $\rho_0$,
fixing initial conditions $\eta^n_0$ satisfying $\zeta^n_0 \to \rho_0$,
we obtain a family 
$\prt{\prt{\eta^{n}_\cdot(x)}_{x \in V}  = \eta^{n}_\cdot}_{n \in \bb{N}}$ of processes that we will show converges to a solution to the SDE~\eqref{eq:kl}.

\subsection{Notation}
 Since the process $\eta^n_\cdot$ corresponds to a probability measure $\bb{P}_n$ on the space $D = D([0,\infty), \bb{R}^{\abs{V}})$ it will be convenient to write $\bb{P} \crt{\eta^n_ \cdot \in A} = \bb{P}_n \prt{A}$ for an event $A$ in the sigma-algebra generated by the Skorohod Topology in $D$ see ~\cite[Chapter 3, theorem 12.5]{B}.
In the same spirit, we let $\bb{E}_n$ denote the expectation with respect to  $\bb{P}_n$ and  we write, for $f:D \to \bb{R}$,
$\bb{E} \crt{f(\eta^n_\cdot)} = \bb{E}_n \crt{f}$.

\medskip

We now define the function $ S^n(\eta) : = \sum_{x \in V} \frac{\eta(x)}{n}$.
When applied to  $\eta^n_t$ we  denote it by $S^n_t =\sum_{x \in V}\zeta^n_t(x)$ which is a measure of the total mass of particles in the system for the process $\zeta^n_\cdot$ at time $t$.
This allows us to define the following stopping times:
  \begin{align*}
    \tau^n_K&:= \inf\chv{t > 0 ; S^n_t>  K}\\
    \hat{\tau}^n_0&:= \inf\chv{t > 0 ; S^n_t=0}
  \end{align*}
We will use these stopping times to have good estimates on the increment rate of the macroscopic quantities of the process
and to confine the process in bounded regions of $\bb{R}^V$ with high probability.

\subsection{Uniform boundedness  of  $\chv{S^n_\cdot}_n$}
\label{sec:nonexp}

We would like to bound the probability that
the total mass $S^n_\cdot$ attains a large value before reaching $0$,
given a bounded initial condition.
That is, we want to prove the following estimate:
\begin{lemma}
\label{l:domest}
 If $ \sup_n S^n_0 \leq C_0$ then
  \[
  \bb{P} \crt{\tau^n_K < \hat{\tau}^n_0} \leq \frac{C_0}{K}.
  \]
\end{lemma}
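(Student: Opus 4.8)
The plan is to show that the total mass $S^n_\cdot$ is a non-negative supermartingale up to the exit time, and then run a gambler's-ruin estimate. First I would apply the generator \eqref{gen} to the mass function $S^n(\eta) = \sum_{x\in V}\eta(x)/n$. The jump transitions $\eta \mapsto \eta^{x,y}$ conserve the total number of particles, so they contribute nothing; a birth raises $S^n$ by $1/n$ and a death lowers it by $1/n$. Hence
\[
L_n S^n(\eta) = \frac{1}{n}\sum_{x\in V}\crt{F_n^{+}(\eta(x)) - F_n^{-}(\eta(x))} = \sum_{x\in V} F_n(\zeta^n(x)),
\]
with $\zeta^n(x) = \eta(x)/n$ and $F_n$ as in \eqref{e:correctedsystemofequations}. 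The key observation is that $F_n(\zeta)\le 0$ for every $\zeta \ge 0$: where no correction is needed one has $F_n(\zeta) = -\beta\zeta^k \le 0$, and where the correction sets $F_n^{+} = 0$ one has $F_n(\zeta) = -n\alpha\zeta^l \le 0$. Since every configuration has non-negative coordinates, $L_n S^n(\eta)\le 0$ for all $\eta$, so $S^n_\cdot$ has non-positive drift.

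Next I would turn this drift bound into a genuine supermartingale statement while avoiding the (not yet excluded) possibility of explosion. The Dynkin martingale $M^{S^n}_t = S^n_t - S^n_0 - \int_0^t L_n S^n(\eta_s)\,ds$ from \eqref{eq:dynkin} is a priori only a local martingale. To make optional stopping legitimate I would stop at $\tau := \tau^n_K\wedge\hat{\tau}^n_0$: for $t<\tau^n_K$ the total mass stays at most $K$, so every coordinate is bounded, only finitely many configurations are visited, and all birth, death and jump rates are bounded. Consequently the increments of $M^{S^n}_{t\wedge\tau}$ are integrable, it is a true martingale, and taking expectations gives
\[
\bb{E}\crt{S^n_{t\wedge\tau}} = S^n_0 + \bb{E}\crt{\int_0^{t\wedge\tau} L_n S^n(\eta_s)\,ds} \le S^n_0 \le C_0
\]
for every $t \ge 0$.

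Finally I would extract the ruin estimate. On the event $\chv{\tau^n_K \le t,\ \tau^n_K < \hat{\tau}^n_0}$ one has $t\wedge\tau = \tau^n_K$ and $S^n_{\tau^n_K} \ge K$, since the mass crosses the level $K$ upward. Because $S^n_\cdot \ge 0$ always,
\[
C_0 \ge \bb{E}\crt{S^n_{t\wedge\tau}} \ge K\,\bb{P}\crt{\tau^n_K \le t,\ \tau^n_K < \hat{\tau}^n_0}.
\]
Letting $t\to\infty$, and noting that $\chv{\tau^n_K < \hat{\tau}^n_0}\subseteq\chv{\tau^n_K < \infty}$, the events on the right increase to $\chv{\tau^n_K < \hat{\tau}^n_0}$, so by monotone convergence $K\,\bb{P}\crt{\tau^n_K < \hat{\tau}^n_0}\le C_0$, which is the claim.

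The step I expect to be the main obstacle is the second one: justifying the upgrade from local to true martingale (and hence the optional-stopping identity) without circularity, since the lemma is itself an ingredient in the non-explosion argument. Localizing at $\tau^n_K$ is exactly what resolves this, because up to that time the visited state space is bounded and the rates are bounded, so the standard justification applies; the passage $t\to\infty$ is then handled by monotone convergence rather than by any integrability assumption on the unstopped process.
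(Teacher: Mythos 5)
Your proof is correct, but it follows a different route from the paper's. The paper turns the observation $F_n^- \geq F_n^+$ into a pathwise coupling: it constructs an auxiliary process $\xi^n \geq \eta^n$ whose birth and death rates are equalized, so that the dominating total mass $W^n_\cdot$ is a (time-changed) symmetric random walk on $n^{-1}\bb{Z}$, and then reads off the bound $C_0/K$ from the gambler's-ruin probability for that walk. You instead work directly with $S^n_\cdot$ itself: you verify $L_n S^n \leq 0$ (which is exactly the content of the paper's remark that $F_n^- \geq F_n^+$, and your case check of the corrected rates is right), localize at $\tau^n_K \wedge \hat{\tau}^n_0$ so that the process lives in the finite set $\chv{\eta \mid S^n(\eta) \leq K}$ where the Dynkin formula is a genuine martingale identity --- the same justification the paper itself invokes in the remark following the lemma, so there is no circularity --- and conclude by optional stopping and $S^n \geq 0$, $S^n_{\tau^n_K} \geq K$. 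Your argument is more self-contained and avoids the coupling construction, which in the paper is described somewhat informally; the paper's approach, on the other hand, yields the stronger pathwise domination $S^n_t \leq W^n_t$ for all $t$, which is the kind of statement that can be reused beyond this single probability estimate. Both give the same bound, and your handling of the limit $t \to \infty$ by monotonicity of the events $\chv{\tau^n_K \leq t,\ \tau^n_K < \hat{\tau}^n_0}$ is clean.
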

\begin{proof}
First, remark that $F^-_n > F^+_n$ for every $n$.
This implies that, at every site, more particles are being annihilated than created, on average.
Then couple $\eta^n$ with $\xi^n$ in such a way that $\eta^n \leq \xi^n$
and for $\xi^n$ the rate of creation of particles equals the rate of annihilation.
With this, we stochastically dominate $S^n$ by a symmetric random walk on $n^{-1}\bb{Z}$.

More precisely, consider $\prt{U_n}_{n \in \bb{N}}$ a family of independent uniform random variables,
and define $\xi^n$ according to the following rule.
Begin with $\xi^n_0 = \eta^n_0$ and whenever a particle birth or a jump occurs at $\zeta^n$ replicate the transition for $\xi^n$.
But when a particle is annihilated at time $t$ at any site $x \in V$ for the process $\eta^n_\cdot$,
we only annihilate a particle at site $x$ at time $t$ for the processes $\xi^n_\cdot$
whenever $U_{T^n(t)} > \frac{F^-_n(\eta^n(x)) - F^+(\eta^n(x))}{2\prt{F^-_n(\eta^n(x)) + F^+(\eta^n(x))}}$;
otherwise we create a particle at that site.
Here $T^n(t)$ is a variable that counts how many transitions have occurred in the process $\eta^n_\cdot$ until time $t$.

Now, define $W^n_t : = \sum_{x \in \bb{V}} \frac{\xi^n_t(x)}{n}$.
Because of the modified rates of $\xi$,
$W^n_t$ is a symmetric random walk in $n^{-1}\bb{Z}$.
Since $\xi^n_t \geq \eta^n_t$ we have that $W^n_t \geq S^n_t$ for every $t$.
Setting the corresponding stopping times
\begin{align*}
  T^n_k &= \inf \chv{t>0 \mid W^n_t > K}\\
  \hat{T}^n_0 &= \inf \chv{t>0\mid W^n_t = 0},
\end{align*}
the coupling between $\eta$ and $\xi$ implies that
$\tau^n_K > T^n_K$ and $\hat{\tau}^n_0 < \hat{T}^n_0$.
Therefore:
\[
\bb{P} \crt{\tau^n_K < \hat{\tau}^n_0} \leq \bb{P} \crt{T^n_K < \hat{T}^n_0} \leq \frac{C_0}{K}.
\]
\end{proof}

\textbf{Remark:}
The above result of uniform non-explosion implies that, almost surely,
$\lim_{K}\tau^n_K =  \infty$.
Note that every function $f: {\bb{N}}^V\to \bb{R}$
is bounded on the set $\chv{\eta\mid S^n(\eta) \leq K}$,
and this implies that the Dynkin formula~\eqref{eq:dynkin}
holds for this choice of stopping times, see~\cite[pg. 43]{LigIPS} for a proof.
Therefore, the domain of $L_n$ is the set of all functions $f:{\bb{N}}^V\to \bb{R}$.

\subsection{Local Martingales and useful computations}
\label{ssec:dynkin}
For $f: \bb{N}^V \to \bb{R}$,
the variance at time $t$ of the martingale $M^{f,L_n}_{t\wedge \tau}$ is~\cite[pg 86 Theorem 4.2.1 \textit{(v)}]{StroockVaradhan}:
\[
\bb{E}\crt{\prt{M^{f,L_n}_{t\wedge \tau}}^2} =  \bb{E}\crt{\int_0^{t\wedge\tau}(Q_n f)(\eta^n_s)\, ds}
\]
where $Q_nf(\eta) := (L_nf^2)(\eta) - 2f(\eta)L_nf(\eta)$.

\medskip

We will need to consider the expression of the local martingales
associated to the functions $f_{x,n}(\eta) = \frac{\eta(x)}{n}$ and $f_{x,y,n} = \prt{f_{x,n}\cdot f_{y,n}} (\eta)$.
We give here the corresponding values of $L_n f$ and $Q_n f$.

While $L_n f$ is given by~\eqref{gen}, $Q_n f$ is given by:
\begin{multline}
Q_nf(\eta) = \sum_{x , y \in V} \eta(x)p(x,y)\crt{f(\eta^{x,y}) - f(\eta)}^2 + \sum_{x \in V} F_n^{+}(\eta(x)) \crt{f(\eta^{x,+}) - f(\eta)}^2 \\
           + \sum_{x \in V} F_n^{-}(\eta(x)) \crt{f(\eta^{x,-}) - f(\eta)}^2
\end{multline}

From the definitions of $L_n$ and $Q_n$,
and the relations on~\eqref{e:correctedsystemofequations}, we have:
\begin{align}
  L_n(f_{x,n})(\eta)&= \Delta_{V,p} \zeta(x) -\beta \prt{\zeta(x)}^k  + \text{error}_n(\zeta(x)) \label{eq:ln} \\
  Q_n(f_{x,n})(\eta)&= \sum_{y} \frac{p(y,x)\zeta(y) + p(x,y)\zeta(x)}{n} + \alpha\prt{\zeta(x)}^\ell
  \label{eq:qn}
\end{align}
After lengthy computations, we see that $L_n f_{x,y,n}(\eta)$ can be written as:
\begin{equation}
L_n f_{x,n} (\eta) \cdot f_{y,n}(\eta) + L_n f_{y,n} (\eta) \cdot f_{x,n}(\eta)
- \prt{\frac{p(x,y)\zeta(x) +p(y,x)\zeta(y)}{n}}.
\label{e:cross_qn}
\end{equation}

\subsection{Discrete analogues of $a$ and $b$ for $L_n$}
\label{ssec:discreteanalogues}
We want to prove that the limit points of the family $\prt{\zeta^n}_n$
are solutions to a martingale problem with respect to an elliptic second-order differential operator $L_*$.
These operators have the general form:
\begin{equation}
  \label{e:diffop}
  (L_*f)(\zeta) = \sum_{x,y \in V} a^*_{x,y}(\zeta) \partial_{x,y} f(\zeta) + \sum_{x\in V} b^*_x(\zeta) \partial_xf(\zeta).
\end{equation}
To recover $a^*$ and $b^*$, we use the coordinate functions $f_x(\zeta) = \zeta(x)$:
\begin{align*}
  b^{*}_x(\zeta)     & = L_{*}f_x(\zeta)\\
  a^{*}_{x,y}(\zeta) & = L_{*}\big(f_x\cdot f_y\big) (\zeta) - f_x(\zeta) L_{*}f_y(\zeta)  - f_y(\zeta) L_{*}f_x(\zeta)\\
  a^{*}_{x,x}(\zeta) & = L_{*} f_x^2 (\zeta) - 2 f_x(\zeta) L_{*} f_x(\zeta)
\end{align*}

By analogy, we define the coefficients for $L_n$:
\begin{align*}
    b^{n}_x(\zeta)     & :=  L_{n} f_{x,n}(\eta)\\
    a^{n}_{x,y}(\zeta) & :=  L_{n} \big(f_{x,n}\cdot f_{y,n}\big) (\eta) - f_{x,n}(\eta) L_{n}f_{y,n}(\eta)  - f_{y,n}(\eta) L_{n}f_{x,n}(\eta)\\
    a^{n}_{x,x}(\zeta) & :=  L_{n} f_{x,n}^2 (\eta) - 2 f_{x,n}(\eta) L_{n} f_{x,n}(\eta) = Q_n f_{x,n}(\eta)
\end{align*}
In light of equations~(\ref{eq:ln}, \ref{eq:qn} and~\ref{e:cross_qn}), we have that:
\begin{equation}
\begin{aligned}
  b^{n}_x (\zeta)& = \Delta_{V,p} \zeta(x) - \beta\prt{\zeta(x)}^k + \text{error}_n(\zeta(x))\\
  a^{n}_{x,y}(\zeta) &= -\,\frac{p(x,y)\zeta(x) + p(y,x)\zeta(y)}{n} \\
  a^{n}_{x,x}(\zeta) & = \sum_y \frac{p(x,y)\zeta(x) + p(y,x)\zeta(y)}{n} + \alpha\prt{\zeta(x)}^\ell
\end{aligned}
\label{e:discr.coef}
\end{equation}

\section{Passing to the limit process}
\label{sec:limitprocess}
After having constructed the family $\prt{\zeta^n_\cdot}_{n\in \bb{N}}$ in the previous Section, 
we now show that it converges to the solution of the SDE~\eqref{eq:kl}.

The classical way of proving convergence is by proving tightness of the family
and uniqueness of its limit points.
The reason tightness is important is that in our setting Prohorov's Theorem~\cite[p. 51 and 138]{B} holds
and ensures that tight families are pre-compact.
This will give us limit points $\zeta^*_\cdot $ of the family $\prt{\zeta^n_\cdot}_{n\in\bb N}$.
Without limit points, we couldn't start to prove convergence.

From a convergent subsequence, we derive properties of the limit
by studying the associated coordinate martingales.
We will show that the limit point solves a well-posed Martingale Problem.
From the correspondence of the martingale problem and the SDE at hand,
we deduce uniqueness of the limit point.
This implies convergence of the whole family $\prt{\zeta^n_\cdot}_{n\in\bb N}$
to the solution of equation~\eqref{eq:kl}.

\bigskip

We write $\bb{P}^n$ to refer to the probability induced by $\zeta^n_\cdot$,
$\bb{P}^n \prt{A} = \bb{P} \crt{\zeta^n_\cdot  \in A}$.
Analogously, $\bb{E}^n$ denotes the expectation with respect to $\bb{P}^n$, so
$\bb{E}^n \crt{f(\omega)} = \bb{E} \crt{f(\zeta^n_\cdot)}$.

\subsection{Tightness}
To prove tightness for the vector-valued process $\chv{\zeta^n_\cdot}_{n \in \bb{N}}$
it suffices to verify that for every $x \in V$
the sequence of processes $\{\zeta^n_t(x) \,;\, t \in [0,\infty)\}_{n\in\bb{N}}$ in $D([0,\infty); \bb{R})$ is tight.

\begin{prop}
The sequence $\{\zeta^n_t(x) \,;\, t \in [0,\infty)\}_{n\in\bb{N}}$ satisfies Aldous's Criterion~\cite[p. 178]{B},
and therefore it is tight.
\end{prop}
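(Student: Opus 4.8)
The plan is to verify the two conditions of Aldous's criterion for the real-valued family $\chv{\zeta^n_t(x)}_{n}$. The first is compact containment: for each fixed $t$ the laws of $\zeta^n_t(x)$ form a tight family in $\bb{R}$, uniformly in $n$. The second is the stopping-time increment condition: for every $\epsilon > 0$,
\[
\lim_{\delta \to 0}\ \limsup_{n}\ \sup_{\tau \leq \tau' \leq \tau + \delta}\ \bb{P}\crt{\abs{\zeta^n_{\tau'}(x) - \zeta^n_{\tau}(x)} > \epsilon} = 0,
\]
where the supremum runs over pairs of stopping times bounded by a fixed horizon $T$. The engine of both verifications is the Dynkin decomposition
\[
\zeta^n_t(x) = \zeta^n_0(x) + \int_0^t L_n f_{x,n}(\eta^n_s)\, ds + M^{f_{x,n},L_n}_t,
\]
combined with the explicit formulas~\eqref{eq:ln} for the drift $L_n f_{x,n}$ and~\eqref{eq:qn} for the integrand $Q_n f_{x,n}$ of the predictable quadratic variation $\langle M^{f_{x,n},L_n}\rangle_t = \int_0^t Q_n f_{x,n}(\eta^n_s)\, ds$.

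For compact containment, I would exploit that particle numbers are nonnegative, so $0 \leq \zeta^n_t(x) \leq S^n_t$ for every site and time, and that the empty configuration is absorbing: at an empty site both the outgoing jump rate $\eta(x)p(x,y)$ and the birth rate $F_n^+(0)$ vanish, the latter because $2F_n^+(0) = \max\chv{0,0} = 0$. Consequently the event $\chv{\sup_t S^n_t > K}$ is contained in $\chv{\tau^n_K < \hat{\tau}^n_0}$, since a trajectory that touches $0$ before reaching level $K$ stays at $0$ forever. Lemma~\ref{l:domest} then yields $\bb{P}\crt{\sup_{t\leq T}\zeta^n_t(x) > K} \leq C_0/K$ uniformly in $n$, which goes to $0$ as $K \to \infty$; this gives the first Aldous condition and, in fact, uniform control of the running maximum.

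For the increment condition, I would localize at the stopping time $\tau^n_K$. Because $\bb{P}\crt{\tau^n_K \leq T} \leq C_0/K$ and the stopped and unstopped processes agree on $\crt{0,T}$ off this event, it suffices to estimate increments of the process stopped at $\tau^n_K$ and then let $K \to \infty$. On $\chv{s \leq \tau^n_K}$ one has $S^n_s \leq K$, so by~\eqref{eq:ln} and the uniform bound~\eqref{eq:errorterm} on the error term the drift obeys $\abs{L_n f_{x,n}(\eta^n_s)} \leq C_K$, whence the finite-variation part contributes at most $C_K \delta$ to an increment over an interval of length $\delta$. For the martingale part, optional sampling together with~\eqref{eq:qn} gives
\[
\bb{E}\crt{\prt{M^{f_{x,n},L_n}_{\tau' \wedge \tau^n_K} - M^{f_{x,n},L_n}_{\tau \wedge \tau^n_K}}^2} = \bb{E}\crt{\int_{\tau \wedge \tau^n_K}^{\tau' \wedge \tau^n_K} Q_n f_{x,n}(\eta^n_s)\, ds} \leq C'_K\, \delta,
\]
since on $\chv{S^n_s \leq K}$ the integrand is bounded by $\sum_y \prt{p(x,y) + p(y,x)}K/n + \alpha K^\ell \leq C'_K$. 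Chebyshev's inequality then bounds the probability of a large increment by $C_K \delta/\epsilon + C'_K \delta/\epsilon^2$ plus the localization error $C_0/K$; sending $\delta \to 0$ for each fixed $K$ and only afterwards $K \to \infty$ closes the argument.

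The main obstacle, and the reason the order of limits is forced, is that the bounds $C_K$ and $C'_K$ grow polynomially in $K$ (like $K^k$ and $K^\ell$, from the reaction terms), so one cannot send $K \to \infty$ first. The structure of Aldous's criterion accommodates this precisely: the inner $\limsup_n$ and the limit $\delta \to 0$ are taken with $K$ held fixed, so the growing constants are harmless, and the final $K \to \infty$ has only to kill the localization probability $C_0/K$ supplied by Lemma~\ref{l:domest}. The one step requiring genuine care is justifying the optional-sampling identity for the stopped martingale, so that the quadratic-variation formula may be applied with its uniformly bounded integrand; the remaining estimates are immediate from the explicit discrete coefficients in~\eqref{e:discr.coef}.
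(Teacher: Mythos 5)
Your proposal is correct and follows essentially the same route as the paper: the same Dynkin decomposition of $\zeta^n_t(x)$ into drift plus martingale, localization via the exit time of $\{S^n \leq K\}$ controlled by Lemma~\ref{l:domest}, and Markov/Chebyshev estimates using the explicit bounds on $L_n f_{x,n}$ and $Q_n f_{x,n}$, with the limits taken in the same order ($\delta \to 0$ at fixed truncation level, then the level to infinity). The only additions are cosmetic: you make explicit the absorbing-at-zero observation behind the identity $\bb{P}\crt{\sup_{t\le T} S^n_t > K} = \bb{P}\crt{\tau^n_K < \hat{\tau}^n_0}$, which the paper leaves implicit.
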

\begin{proof}
We need to show that, given any $T > 0$, our sequence satisfies the following two conditions:
\begin{align*}
& i) &&
  \lim_{A \to \infty} \sup_{n }\bb{P}\crt{\sup_{t \leq T}\abs{\zeta^n_t(x)}> A}=0 \\
& ii) \ \forall \, \epsilon>0: &&
  \lim_{\delta_0 \to 0} \sup_{n \in \bb{N}} \sup_{\delta \leq \delta_0} \sup_{\tau \in \mc{T}_T} \bb{P}\prt{\abs{\zeta^n_{\tau+\delta}(x)-\zeta^n_\tau(x)}> \epsilon }= 0 \\
&&&
\text{where $\mc{T}_T = \chv{\tau \mid \text{$\tau$ is a stopping time bounded by $T$}}$\footnote{We abuse notation  $\tau + \delta = \min\chv{\tau + \delta , T}$ for convenience.}}
\end{align*}

Condition $i)$ follows from Lemma \ref{l:domest}, since
\[
  \bb{P}\crt{\sup_{t \leq T}\abs{\zeta^n_t(x)} > A} \leq \bb{P}\crt{\sup_{t \leq T} S^n_t > A} = \bb{P}\crt{\tau^n_A < \hat{\tau}^n_0}.
\]

The basic idea for condition $ii)$
is to write differences as integral expressions over a time interval of length $\delta$,
for which we will use local martingales.
Let $M^{x,n}$ denote the local martingale
associated with the function $f_{x,n}(\eta) = \frac{\eta(x)}{n}$.
Upon rewriting its defining equation~\eqref{eq:dynkin}, we get:
\[
  \zeta^n_{t}(x) = \zeta^n_0(x)
      + \prt{\int_0^{t} \Delta_{V,p} \zeta^n_s(x) -\beta\prt{\zeta^n(x)}^k  + \text{error}_n(\zeta^n) \, ds}
      + M^{x,n}_{t}.
\]
So, for $\delta \leq  \delta_0$:
\[
  \zeta^n_{\tau+\delta}(x)-\zeta^n_{\tau}(x)
  = \prt{\int_{\tau}^{\tau + \delta }\hspace{-.59cm} \Delta_{V,p} \zeta^n_s(x) -\beta\prt{\zeta^n(x)}^k\hspace{-.1cm}  + \text{error}_n(\zeta^n) \, ds}
    + \prt{M^{x,n}_{\tau + \delta} - M^{x,n}_\tau\vphantom{\int}}.
\]
Since we want a bound on the left-hand side, we will bound each term on the right by $\epsilon/2$.

We start with an estimate of the integral term.
Observe that
\begin{multline*}
  \bb{P} \crt{\;\abs{\int_\tau^{\tau + \delta} \Delta_{V,p} \zeta^n_s(x) -\beta\prt{\zeta^n}^k  + \text{error}_n(\zeta^n))\, ds}>\frac{\epsilon}{2}}\\
  \leq \bb{P} \crt{\;\abs{\int_{\tau \wedge \tau^n_A}^{(\tau + \delta) \wedge \tau^n_A} \Delta_{V,p} \zeta^n_s(x) -\beta\prt{\zeta^n}^k  + \text{error}_n(\zeta^n))\, ds}>\frac{\epsilon}{2}\;} + \bb{P} \crt{\tau^n_A< T}.
\end{multline*}
Using again Lemma \ref{l:domest}, it follows that
\[
\sup_n\bb{P} \crt{\tau^n_A< T} = \sup_n\bb{P} \crt{\sup_{t \leq T}
  S_t^n(x) > A} \xrightarrow[A \to \infty]{} 0.
\]
The stopping time $\tau_A^n$ ensures
that every dependence on $\zeta$ is uniformly bounded by $A$.
Therefore, the absolute value of the integrated term is bounded by a constant depending on $A$:
\[
  \abs{\Delta_{V,p} \zeta^n_s(x)-\beta\prt{\zeta^n}^k + \text{error}_n(\zeta^n)}
   \leq  C_1(A).
\]

Coming back to the stopped integral, by Markov's inequality:
\begin{align*}
  & \bb{P} \crt{\;\abs{\int_{\tau \wedge \tau^n_A}^{(\tau + \delta) \wedge \tau^n_A} \Delta_{V,p} \zeta^n_s(x)-\beta\prt{\zeta^n}^k  + \text{error}_n(\zeta^n)\, ds}>\frac{\epsilon}{2}\;} \\
  & \qquad \leq \frac{2}{\epsilon} \bb{E} \crt{\int_{\tau \wedge \tau^n_A}^{(\tau + \delta) \wedge \tau^n_A} \abs{\Delta_{V,p} \zeta^n_s(x)-\beta\prt{\zeta^n}^k  + \text{error}_n(\zeta^n)}\, ds} \\
  & \qquad \leq \frac{2}{\epsilon} \delta \, C_1(A)
    \leq \frac{2}{\epsilon} \delta_0 \, C_1(A) \xrightarrow[\delta_0 \to 0]{} 0.
\end{align*}

We estimate $\bb{P}\crt{\;\abs{ M^{x,n}_{\tau +\delta} - M^{x,n}_\tau}> \frac{\epsilon}{2}\;}$ by a similar stopping-time argument,
so the remaining term is
\begin{align*}
  & \bb{P}\crt{\;\abs{ M^{x,n}_{(\tau + \delta)\wedge \tau^n_A} - M^{x,n}_{\tau\wedge \tau^n_A}}> \frac{\epsilon}{2}}\\
\noalign{which by Chebychev's inequality and the martingale property of $M^{x,n}_\cdot$,}
  & \qquad \leq \frac{4}{\epsilon^2}\bb{E}\crt{\prt{M^{x,n}_{(\tau + \delta) \wedge \tau^n_A}}^2 - \prt{M^{x,n}_{\tau \wedge \tau^n_A}}^2}\\
  & \qquad  =   \frac{4}{\epsilon^2}\bb{E}\crt{\int_{\tau\wedge \tau^n_A}^{(\tau + \delta) \wedge \tau^n_A} Q_n(f_{x,n})(\eta^n_s) \, ds}\\
\noalign{and bounding $Q_n(f_{x,n})$ using again the bounds on $\zeta^n_{t\wedge \tau^n_A}$:}
  & \qquad \leq \frac{4}{\epsilon^2}\bb{E}\crt{\int_{\tau\wedge \tau^n_A}^{(\tau + \delta) \wedge \tau^n_A} C_2(A) \,ds}\\
  & \qquad \leq \frac{4}{\epsilon^2} \delta \, C_2(A)
    \leq \frac{4}{\epsilon^2} \delta_0 \, C_2(A)
    \xrightarrow[\delta_0 \to 0]{} 0 .
\end{align*}

This shows that the family $\chv{\zeta^n_\cdot(x)}_n$ satisfies condition $ii)$ of Aldous's Criterion,
and concludes the proof of its tightness.
\end{proof}

\subsection{Characterization of the limit}

Now that we proved that the sequence $\prt{\zeta^n_\cdot}_n$ --- or, equivalently, $\prt{\bb{P}^n}_n$ --- is tight, it remains to characterize its limit points.
For this, we will show that:
\begin{itemize}
\item  the limit points are continuous,
\item  the limit points solve the martingale problem for some second-order elliptic differential operator $L_*$, and that
\item  there is a unique solution to the martingale problem for $L_*$.
\end{itemize}

\subsubsection{Continuity of  paths of the limit process}
We claim that any limit measure of the tight family  $\prt{\bb{P}^n}_{n \in \bb{N}}$  gives measure $1$ to continuous trajectories.
This follows from an analysis of the jump function:
\begin{align*}
  J: D([0,T], E)& \to \bb{R}\\
x & \mapsto \sup_{t \in (0,T]} d_E(x(t -), x(t)).
\end{align*}
Since every $x \in D$ is right-continuous, the condition $J(x) = 0$ implies that $x$ is a left-continuous path in $[0,T]$
and therefore $x$ is continuous in $[0,T]$.

Let $\bb{P}^*$ be a limit point of the sequence $\bb{P}^n$.
That is, for some subsequence $\bb{P}^{n'}$,
we have $\bb{P}^{n'} \xrightarrow[n'\to \infty]{J_1} \bb{P}^*$.
We claim that $\bb{P}^*\crt{J(x) = 0} = 1.$
The key observation to  prove this is to note that $J$ is continuous with respect to the $J_1$-Skorohod topology~\cite[p. 125]{B}.
This means that
\[
  \crt{J(x) > a} = \chv{x\mid J(x)>a} = J^{-1}\prt{(a,\infty)}
\]
is an open set.
Since jumps of $\zeta^n$ have magnitude $1/n$,
for $n > K$, $\bb{P}_n \crt{J(x) > \frac{1}{K}} = 0$
so by the Portemanteau Theorem~\cite[p. 16]{B}:
\[
\bb{P}^*\crt{J(x)> \frac{1}{K}} \leq \liminf_{n'} \bb{P}^{n'} \crt{J(x)> \frac{1}{K}} = 0.
\]

This implies that
\[
\bb{P}^*\crt{J(x) > 0}
  = \bb{P}^*\prt{\cup_{K\in \bb{N}} \crt{J(x) > \frac{1}{K} }}
  \leq \sum_{K\in \bb{N}} \bb{P}^*\crt{J(x)> \frac{1}{K}} = 0
\]
which is the same as $\bb{P}^* \crt{J(x) = 0} = 1$.

\subsubsection{Martingale problem}

In this section, we construct a second-order differential operator $L_*$
from the limits of the discrete coefficients of $L_n$ given in equation~\eqref{e:discr.coef},
and prove that any limit point of $\bb{P}^n$ is a solution to the martingale problem associated with it.
Remembering that the error term vanishes uniformly as $n\to\infty$, we define:
\begin{align*}
  b^{*}_x(\zeta)& = \Delta_{V,p} \zeta(x) - \beta\prt{\zeta(x)}^k\\
  a^{*}_{x,y}(\zeta) &= 0\\
  a^{*}_{x,x} (\zeta)& = \alpha\prt{\zeta(x)}^\ell,
\end{align*}
to be the coefficients of the differential operator $L_*$ as in~\eqref{e:diffop}.

\medskip

To prove that limit points of $\bb{P}^n$ solve the martingale problem associated with $L^*$,
we don't need to verify that $\big\{\, M^{f,L_*}_t, \mc{F}_t; t \geq 0 \,\big\}$
is a continuous local martingale for every $f \in C^2( \bb{R}^d)$.
By~\cite[p. 318]{KS}, we only need to show that:
\begin{prop}
\begin{align*}
  M^{x,L_*}_t   & := \zeta^*_t(x) - \zeta^*_0(x) - \int_0^t b^*_x(\zeta^*_s)\, ds\\
  M^{x,y,L_*}_t & := \zeta^*_t(x)\zeta^*_t(y) - \zeta^*_0(x)\zeta^*_0(y) \\
              & \qquad\qquad - \int_0^t b^*_x(\zeta^*_s)\zeta^*_s(y) + b^*_y(\zeta^*_s)\zeta^*_s(x) + a^*_{x,y}(\zeta^*_s) \, ds
\end{align*}
are continuous local martingales.
\end{prop}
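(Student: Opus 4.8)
The plan is to realize the two limit processes as limits of the discrete Dynkin martingales of Section~\ref{ssec:dynkin}, exploiting the uniform convergence of the discrete coefficients to those of $L_*$. For the coordinate function $f_{x,n}$, equation~\eqref{eq:ln} gives that
\[
M^{x,n}_t = \zeta^n_t(x) - \zeta^n_0(x) - \int_0^t b^n_x(\zeta^n_s)\, ds
\]
is a local martingale with $b^n_x(\zeta) = \Delta_{V,p}\zeta(x) - \beta(\zeta(x))^k + \text{error}_n(\zeta(x))$; by~\eqref{eq:errorterm} this converges to $b^*_x$ uniformly on bounded sets. For the product $f_{x,y,n}$, formula~\eqref{e:cross_qn} shows the associated drift is $b^n_x(\zeta)\zeta(y) + b^n_y(\zeta)\zeta(x) + a^n_{x,y}(\zeta)$, which converges uniformly on bounded sets to the integrand appearing in $M^{x,y,L_*}$ (recall $a^*_{x,y}=0$). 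Thus the candidate limit martingales are exactly the expressions obtained by replacing the discrete coefficients from~\eqref{e:discr.coef} with their limits.

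First I would localize. Since $M^{x,n}$ and $M^{x,y,n}$ are only local martingales, I stop them at $\tau^n_A$. On $\chv{t \leq \tau^n_A}$ every coordinate is bounded by $A$, so the drifts are bounded by some $C(A)$, and using the variance formula together with the bound $a^n_{x,x}(\zeta) = Q_n f_{x,n}(\eta) \leq C_2(A)$ from the tightness proof one gets $\bb{E}\crt{\prt{M^{x,n}_{t\wedge\tau^n_A}}^2} = \bb{E}\crt{\int_0^{t\wedge\tau^n_A} Q_n f_{x,n}(\eta^n_s)\, ds} \leq t\, C_2(A)$. This uniform $L^2$-bound, and its analogue for $M^{x,y,n}$, delivers the uniform integrability needed to pass the martingale identity to the limit.

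Next I would pass to the limit along the convergent subsequence $\bb{P}^{n'} \to \bb{P}^*$. Using the Skorohod representation theorem I may assume $\zeta^{n'} \to \zeta^*$ almost surely in the $J_1$ topology; since the limit paths are continuous, this upgrades to uniform convergence on compact time intervals. I would then fix $0 \leq s < t$ and a bounded continuous functional $\Phi$ depending only on the path up to time $s$, and pass to the limit in the discrete identity $\bb{E}\crt{\prt{M^{x,n}_{t\wedge\tau^n_A} - M^{x,n}_{s\wedge\tau^n_A}}\Phi} = 0$. The drift integrals converge by the uniform convergence of both coefficients and paths, the boundary terms converge by path convergence, and the uniform integrability from the previous step justifies exchanging limit and expectation. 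This yields $\bb{E}^*\crt{\prt{M^{x,L_*}_{t\wedge\tau^*_A} - M^{x,L_*}_{s\wedge\tau^*_A}}\Phi} = 0$, so the stopped limit is a martingale; the same argument applies to $M^{x,y,L_*}$.

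The main obstacle is the treatment of the stopping times: I must show $\tau^{n'}_A \to \tau^*_A$, where $\tau^*_A = \inf\chv{t>0 : S^*_t > A}$ with $S^*_\cdot = \sum_{x} \zeta^*_\cdot(x)$, and handle the fact that the stopped functional $x \mapsto M_{t \wedge \tau_A(x)}$ is not continuous on all of $D$. The remedy uses the almost sure continuity of $S^*_\cdot$: for each continuous path the set of levels at which $S^*$ touches without crossing is the countable set of its strict local maximum values, so by Fubini, for Lebesgue-almost every level $A$ the limit path almost surely crosses $A$. For such $A$ one has $\tau^{n'}_A \to \tau^*_A$ almost surely and the discontinuity set of the stopped functional is $\bb{P}^*$-null; since we only need a sequence $A_m \uparrow \infty$, restricting to good levels suffices. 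Finally, continuity of the two limit processes follows from the continuity of the paths of $\zeta^*$ together with the continuity of the Lebesgue integrals, and since $\tau^*_A \to \infty$ almost surely as $A \to \infty$ (continuous paths being locally bounded), the localizing sequence $\prt{\tau^*_{A_m}}_m$ exhibits $M^{x,L_*}$ and $M^{x,y,L_*}$ as continuous local martingales, completing the proof.
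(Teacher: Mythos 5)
Your proposal is correct and shares the overall architecture of the paper's proof (Skorohod representation, localization by mass stopping times, uniform convergence of the stopped processes and of the discrete coefficients $b^n_x, a^n_{x,y}$ to $b^*_x, a^*_{x,y}$, and a uniform-integrability argument to transfer the martingale property to the limit). The one place where you take a genuinely different route is the delicate point of the whole argument, namely the convergence of the stopping times. You keep the ``intrinsic'' times $\tau^n_A = \inf\chv{t : S^n_t > A}$ and handle the possible failure of $\tau^n_A \to \tau^*_A$ (when the limit path touches level $A$ without crossing) by the classical good-levels device: for each continuous path the map $A \mapsto \tau_A$ is nondecreasing, hence has countably many discontinuities, so by Fubini almost every level $A$ is $\bb{P}^*$-a.s.\ a continuity level, and one localizes along a sequence $A_m \uparrow \infty$ of such levels. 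The paper instead sidesteps the issue by defining the hybrid time $\tau^{*,n}_A := \inf\chv{t>0 : S^n_t > A,\ S^*_t \geq A}$ in \eqref{e:stop}, which satisfies $\tau^{*,n}_A \leq \tau^*_A$ by construction, so that convergence follows directly from locally uniform convergence of $S^n$ to $S^*$ with no exceptional levels. Your version has the advantage that $\tau^n_A$ is unambiguously a stopping time for the natural filtration of $\zeta^n$ alone, so optional stopping of $M^{x,L_n}$ is immediate, at the price of the measure-theoretic good-levels lemma; the paper's version avoids that lemma but couples the stopping time to the limit process. Your replacement of ``a uniformly integrable sequence of martingales converges to a martingale'' by the explicit identity $\bb{E}\crt{\prt{M_{t\wedge\tau} - M_{s\wedge\tau}}\Phi} = 0$ with $\Phi$ measurable up to time $s$, backed by the $L^2$ bound $\bb{E}\crt{\prt{M^{x,n}_{t\wedge\tau^n_A}}^2} \leq t\,C_2(A)$, is an equivalent (if anything, more explicit) formulation of the paper's fourth step; note the paper gets uniform integrability even more cheaply by observing the stopped martingales are uniformly bounded. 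Both arguments are complete and correct.
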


\begin{proof}
Our proof uses Skorohod's Representation Theorem~\cite[pg 70]{B},
to consider a convenient $\Omega$ where
$\zeta^n_\cdot(\omega) \xrightarrow[n \to \infty]{J_1} \zeta^*_\cdot(\omega)$ for every $\omega \in \Omega$.
This is possible since $\zeta^n_\cdot \xrightarrow[n \to \infty]{J_1} \zeta^*_\cdot$ in law.

Then, the proof splits into the following steps:
\begin{enumerate}
\item Define $\tau^{*,n}_A$ a family of stopping times by 
\begin{equation}
  \tau^{*,n}_A := \inf\chv{t > 0 ; S^n_t> A , S^*_t \geq A}\label{e:stop}
\end{equation}
and prove that 
$ \tau^{*,n}_A \to \tau^*_A := \inf\chv{t >0 ; S^*_t \geq A}.$

\item Prove that the stopped process converge to the correct stopped limit:
\[
  \zeta^n_{\cdot \wedge \tau^{*,n}_A} \xrightarrow[n \to \infty]{J_1} \zeta^*_{\cdot \wedge \tau^*_A}.
\]

\item Show that the stopped local martingales also converge appropriately:
\[
M^{x,L_n}_{t\wedge \tau^{*,n}_A} \xrightarrow[n \to \infty]{J_1} M^{x,L_*}_{t\wedge \tau^*_A}
  \qquad \text{and} \qquad
M^{x,y,L_n}_{t\wedge \tau^{*,n}_A} \xrightarrow[n \to \infty]{J_1} M^{x,y,L_*}_{t\wedge \tau^*_A}.
\]

\item Conclude that the limit processes $M^*_{\cdot \wedge \tau^*_A}$ above are actually martingales, and therefore the non-stopped version are local martingales.
\end{enumerate}

\medskip
\textbf{First step: prove that $\tau^{*,n}_A(\omega) \to \tau^*_A(\omega)$ for every $\omega$.}

By definition, $\tau^{*,n}_A(\omega) \leq \tau^*_A(\omega)$,
so it remains to see that if $q < \tau^*_A(\omega)$ then there is $N(\omega)$ such that for $n > N(\omega)$ 
\[
q <\tau^{*,n}_A(\omega).
\]
Since $q < \tau^*_A(\omega)$, for every $s \leq q$ we have $S^*_s(\omega) < A$.
By the continuity of $S^*_s(\omega)$, there is $\epsilon(\omega)$ such that $S^*_s(\omega) < A - \epsilon(\omega)$.
Since $\zeta^n_\cdot(\omega)$ converges to $\zeta^*_\cdot(\omega)$ in the $J_1$ topology,
and $t \mapsto \zeta^*_t(\omega)$ is continuous,
$\zeta^n_\cdot(\omega)$ converges uniformly in bounded intervals to $\zeta^*_\cdot(\omega)$.
This implies that there is an $N(\omega)$ such that for $n > N(\omega)$,
$\sup_{s \leq q} \norm{S^n_s(\omega) - S^*_s(\omega)} < \epsilon(\omega)$. Then:
\[
 S^n_s(\omega)  = S^n_s(\omega) - S^*_s(\omega) + S^*_s(\omega) <  A 
\]
which implies that $\tau^{*,n}_A(\omega) \ge q$.
Therefore $\tau^{*,n}_A(\omega) \to \tau^*_A(\omega)$.

\medskip
\textbf{Second step: $\zeta^n_{\cdot \wedge \tau^{*,n}_A} \xrightarrow[n \to \infty]{J_1} \zeta^*_{\cdot \wedge \tau^*_A}$.}

Fix $\epsilon > 0$.
From the uniform convergence of $\zeta^n_\cdot(\omega)$ to $\zeta^*_\cdot(\omega)$ on $[0,T]$,
we can choose $N_1(\omega)$ such that
\[ 
  \sup_{t \leq T}\norm{\zeta^n_{t}(\omega) - \zeta^*_{t}(\omega)} < \epsilon \quad \text{ for all $n > N_1(\omega)$.}
\]
From the uniform continuity of $\zeta^*_\cdot(\omega)$ on $[0,T]$,
there is $\delta(\omega) > 0$ such that
\[
  \text{for } \abs{t - t'} < \delta(\omega), \quad \norm{\zeta^*_t(\omega) - \zeta^*_{t'}(\omega)} < \epsilon.
\]
From the previous step, we choose $N_2(\omega)$ such that $(T \wedge \tau^*_A) - (T \wedge \tau^{*,n}_A) < \delta(\omega)$.

Recalling that $\tau^{*,n}_A(\omega) \leq \tau^*_A(\omega)$, we have two cases to consider:
\begin{itemize}
\item if $t \leq \tau^{*,n}_A(\omega)$, for $n > N_1(\omega)$:
\[
\norm{ \zeta^n_{t \wedge \tau^{*,n}_A} - \zeta^*_{t \wedge \tau^*_A}}
  = \big\lVert \zeta^n_{t} - \zeta^*_{t} \big\rVert < \epsilon \ ;
\]

\item if $t > \tau^{*,n}_A(\omega)$, for $n > N_1(\omega)$ and $n > N_2(\omega)$:
\[
\norm{ \zeta^n_{t \wedge \tau^{*,n}_A} - \zeta^*_{t \wedge \tau^*_A}}
  = \norm{ \zeta^n_{\tau^{*,n}_A} - \zeta^*_{t \wedge \tau^*_A}}
  \leq  \norm{ \zeta^n_{\tau^{*,n}_A} - \zeta^*_{\tau^{*,n}_A}} + \norm{ \zeta^*_{\tau^{*,n}_A} - \zeta^*_{t \wedge \tau^{*,n}_A}} < 2 \epsilon.
\]
\end{itemize}

This proves uniform convergence on $t \in [0,T]$ of $\zeta^n_{t \wedge \tau^{*,n}_A}$ to $\zeta^*_{t \wedge \tau^*_A}$.

\medskip
\textbf{Third step:}

We are going to show that
\[
M^{x,L_n}_{t\wedge \tau^{*,n}_A} = \zeta^n_{t\wedge \tau^{*,n}_A}(x) - \zeta^n_0(x) - \int_0^{t\wedge \tau^{*,n}_A} b^n_x(\zeta^n_s)\, ds
\]
converges in the $J_1$ topology to
\[
M^{x,L_*}_{t\wedge \tau^*_A} = \zeta^*_{t\wedge \tau^*_A}(x) - \zeta^*_0(x) - \int_0^{t\wedge \tau^*_A} b^*_x(\zeta^n_s)\, ds.
\]
From the previous step, we only need to prove the convergence for the third term,
that is:
\[
 \int_0^{t\wedge \tau^{*,n}_A} b^n_x(\zeta^n_s)\, ds \xrightarrow[n \to \infty]{J_1} \int_0^{t\wedge \tau^*_A} b^*_x(\zeta^*_s)\, ds.
\]

Given $\epsilon > 0$, we can find $N$ (remember this depends on $\omega$) such that, for every $n > N$:
\begin{itemize}
\item $\sup_{s \leq \tau^{*,n}_A} \abs{b^n_x(\zeta^n_s) - b^*_x(\zeta^n_s)} < \frac{\epsilon}{T}$,
  because $\zeta^n_s(x) \leq A$ and so the error term vanishes, see~\eqref{eq:errorterm};
\item $\sup_{s \leq \tau^{*,n}_A} \abs{b^*_x(\zeta^n_s) - b^*_x(\zeta^*_s)} < \frac{\epsilon}{T}$,
  since $b^*_x$ is uniformly continuous on $[0,A]^V$ and $\sup_{s \leq T} \abs{\zeta^n_s - \zeta^*_s} \to 0$;
\item $\abs{\tau^*_A -\tau^{*,n}_A} < \frac{\epsilon}{B_A}$ where $B_A = \sup_{\zeta \in [0,A]^V} \abs{b^*_x(\zeta)}$, by step 1.
\end{itemize}

Putting these inequalities together:
\begin{align*}
  & \abs{ \int_0^{t\wedge \tau^{*,n}_A} b^n_x(\zeta^n_s)\, ds - \int_0^{t\wedge \tau^*_A} b^*_x(\zeta^*_s)\, ds} \\
  & \qquad \leq \int_0^{t\wedge \tau^{*,n}_A}\abs{b^n_x(\zeta^n_s) - b^*_x(\zeta^*_s)} \, ds + \int_{t\wedge \tau^{*,n}_A}^{t\wedge \tau^*_A}\abs{b^*_x(\zeta^*_s)} \, ds \\
  & \qquad < T \, 2 \frac{\epsilon}{T} + \frac{\epsilon}{B_A} B_A  = 3 \epsilon
\end{align*}
for all $n > N$, which concludes this step.

\medskip
\textbf{Fourth step:}

Since $\tau^{*,n}_A$ is a stopping time, $M^{x,L_n}_{t\wedge \tau^{*,n}_A}$ is martingale, and by the definition of $\tau^{*,n}_A$, uniformly bounded in $n$.
This implies that it is a sequence of uniformly integrable martingales and therefore its limit is also a martingale.
This proves step 4 and because $\tau^*_A \uparrow \infty$ when $A \uparrow \infty$, we conclude that $M^{x,L_*}_{t}$  is a local martingale.

\bigskip

The proof that $M^{x,y,L_*}_{\cdot}$ is a local martingale follows from an analogous reasoning:
We use again Skorohod's Representation Theorem, and step~1 is unchanged. 
The uniform convergence of $\zeta^n_{t \wedge \tau^{*,n}_A}(x)\zeta^n_{t \wedge \tau^{*,n}_A}(y)$
to $\zeta^*_{t \wedge \tau^*_A}(x)\zeta^*_{t \wedge \tau^*_A}(y)$ follows from what we proved in step 2 above. 
The convergence of $M^{x,y,L_n}_{\cdot \wedge\tau^{*,n}_A}$ to $M^{x,y,L_*}_{\cdot\wedge \tau^*_A}$ (step 3) follows from the following limits:
\begin{itemize}
\item $\sup_{s \leq T} \abs{\zeta^n_s - \zeta^*_s} \to 0$
\item $\abs{\tau^*_A -\tau^{*,n}_A} < \frac{\epsilon}{A}$
\item $\abs{b^n_x(\zeta) - b^*_x(\zeta)} \to 0$
\item $\abs{a^n_x(\zeta) - a^*_x(\zeta)} \to 0$
\end{itemize}
which imply that
\[
\int_0^{t\wedge \tau^{*,n}_A} b^n_x(\zeta^n_s)\zeta^n_s(y) + b^n_y(\zeta^n_s)\zeta^n_s(x) + a^n_{x,y}(\zeta^n_s) \, ds
\]
converges uniformly (in $t \leq T$) to 
\[
\int_0^{t\wedge \tau^*_A} b^*_x(\zeta^*_s)\zeta^*_s(y) + b^*_y(\zeta^*_s)\zeta^*_s(x) + a^*_{x,y}(\zeta^*_s) \, ds.
\]
We conclude the proof (step 4) by noting that $M^{x,y,L_*}_{\cdot\wedge \tau^*_A}$ 
is the limit of a family of uniformly integrable martingales (in fact the martingales are here again uniformly bounded)
and this implies that $M^{x,y,L_*}_{\cdot\wedge \tau^*_A}$ is a martingale.
\end{proof}

\subsubsection{Uniqueness}

To conclude that the tight sequence of processes $\chv{\zeta^n_\cdot}$ converges,
it suffices to prove that it has a unique limit point.
This is the final step needed for the characterization of the limits of the particle systems associated with $L_n$.

In our case, uniqueness of solutions to the martingale problem follows from
pathwise uniqueness of solutions to the correspondent SDE;
either because both coefficients $b$ and~$\sigma$ satisfy locally Lipschitz conditions,
or because the dispersion coefficients $\sigma$ vanish as $\sqrt{x}$ near the boundary
while the drift coefficients $b$ remain locally Lipschitz.
This is a limit case in the sense that if the dispersion matrix vanished with rate $x^{\alpha}$ with $\alpha < \frac{1}{2}$
we would no longer have uniqueness.

The correspondence between solutions to martingale problems and solution to SDE's is given in~\cite[Corollaries 4.8 and 4.9, p. 317]{KS}.
Existence and uniqueness of solutions $\crt{(X,W), \prt{\Omega, \mc{F}, \bb{P}}, \mc{F}_t}$ in the sense of probability law to an SDE with a fixed but arbitrary initial distribution
\[
\bb{P} \crt{X_0 \in \Gamma} = \mu(\Gamma)
\]
is equivalent to existence and uniqueness of solutions $P$ to the corresponding martingale problem with the initial condition
\[
P\crt{y \in C([0,\infty) , \bb{R}^d \mid y(0)\in \Gamma} = \mu(\Gamma) \; \Gamma \in \mc{B}(\bb{R}^d).
\]
The two solutions are related by $\bb{P}(X \in A) = P(A)$, that is, the solution to the martingale problem is the law induced by the weak solution to the SDE.

Also, uniqueness in law follows from pathwise uniqueness~\cite[p. 301 and 331]{YW,KS}.
So, we only need to verify that pathwise uniqueness holds for our equations.
We need to treat two cases separately.

\medskip

For the case $\ell \geq 2$, since both $b$ and $\sigma$ are $C^1$,
we can apply
\begin{teoutro}[\protect{\cite[p.287]{KS}}]
  Suppose that the coefficients $b(t,\zeta)$ and $\sigma(t,\zeta)$ are locally Lipschitz continuous in the space variable; i.e., for every $n \geq 1$ there exists a constant $K_n >0$ such that for every $t \geq 0, \norm{\zeta} \leq n$ and $\norm{\tilde{\zeta}} \leq n$:
\[
\norm{b(t,\zeta) - b(t,\tilde{\zeta})} + \norm{\sigma(t,\zeta) - \sigma(t,\tilde{\zeta})} \leq K_n \norm{\zeta-\tilde{\zeta}}.
\]
Then pathwise uniqueness holds for $(b,\sigma)$, that is, for the vector valued SDE
\[
d\zeta_t = b(t,\zeta) dt + \sigma(t,\zeta) \, dB_t.
\]
\end{teoutro}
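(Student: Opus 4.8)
The plan is to establish pathwise uniqueness by the classical localization--Gronwall argument. Suppose $\zeta$ and $\tilde\zeta$ are two solutions driven by the same Brownian motion $B$ with the same initial condition $\zeta_0 = \tilde\zeta_0$ almost surely; the goal is to show $\bb{P}\crt{\zeta_t = \tilde\zeta_t \text{ for all } t \geq 0} = 1$. First I would reduce to a globally Lipschitz situation by truncation: for each $n$ introduce the stopping time $\tau_n := \inf\chv{t \geq 0 : \norm{\zeta_t} \geq n \text{ or } \norm{\tilde\zeta_t} \geq n}$, so that on $[0, \tau_n)$ both paths remain in the ball of radius $n$, where the Lipschitz estimate with constant $K_n$ applies.

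Next, using the integral form of the SDE, the difference of the stopped solutions is
\begin{equation*}
  \zeta_{t\wedge\tau_n} - \tilde\zeta_{t\wedge\tau_n} = \int_0^{t\wedge\tau_n}\prt{b(s,\zeta_s) - b(s,\tilde\zeta_s)}\, ds + \int_0^{t\wedge\tau_n}\prt{\sigma(s,\zeta_s) - \sigma(s,\tilde\zeta_s)}\, dB_s.
\end{equation*}
Taking squared norms and expectations, bounding the drift term by Cauchy--Schwarz and the martingale term by It\^o's isometry (the cross term vanishes because the stopped stochastic integral is a true martingale), and finally invoking the local Lipschitz bound, yields an estimate of the form
\begin{equation*}
  \bb{E}\norm{\zeta_{t\wedge\tau_n} - \tilde\zeta_{t\wedge\tau_n}}^2 \leq C_n \int_0^t \bb{E}\norm{\zeta_{s\wedge\tau_n} - \tilde\zeta_{s\wedge\tau_n}}^2\, ds,
\end{equation*}
with $C_n$ depending on $K_n$, the dimension, and the time horizon $T$.

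The conclusion then follows from Gronwall's inequality: the displayed integral inequality forces $\bb{E}\norm{\zeta_{t\wedge\tau_n} - \tilde\zeta_{t\wedge\tau_n}}^2 = 0$ for every $t \leq T$, hence $\zeta_{t\wedge\tau_n} = \tilde\zeta_{t\wedge\tau_n}$ almost surely. Letting $n \to \infty$, and using that the solutions do not explode so that $\tau_n \uparrow \infty$ almost surely, I obtain $\zeta_t = \tilde\zeta_t$ for all $t$ on a set of full measure, which is pathwise uniqueness.

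The main obstacle I anticipate is the justification that the cross term in the second-moment computation vanishes, that is, that the stopped stochastic integral is a genuine martingale rather than merely a local one. This is precisely what the localization by $\tau_n$ secures: on $[0,\tau_n]$ the integrand $\sigma(s,\zeta_s) - \sigma(s,\tilde\zeta_s)$ is bounded, since the coefficients are continuous on the compact ball of radius $n$, so the It\^o isometry applies with finite expectations and the optional-stopping argument closes the estimate. A secondary point requiring care is that $\bb{E}\norm{\zeta_{t\wedge\tau_n} - \tilde\zeta_{t\wedge\tau_n}}^2$ be finite and measurable in $t$ before Gronwall can be invoked, which again follows from the boundedness granted by the stopping time.
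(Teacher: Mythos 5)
The paper does not actually prove this statement: it is quoted from Karatzas and Shreve \cite[Theorem 5.2.5, p.~287]{KS} and used as a black box to obtain pathwise uniqueness in the case $\ell \geq 2$. Your localization--Gronwall scheme is precisely the proof given there, so you have reconstructed the intended argument rather than found a different route.

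One step of your justification is wrong as stated, even though the displayed Gronwall estimate you are aiming for is correct and attainable. Writing $\zeta_{t\wedge\tau_n}-\tilde\zeta_{t\wedge\tau_n}=A_t+M_t$ with $A_t$ the drift integral and $M_t$ the stopped stochastic integral, you expand the square and claim that the cross term $\bb{E}\crt{\langle A_t, M_t\rangle}$ vanishes because $M$ is a true martingale. That is false in general: $A_t$ is an adapted functional of the path up to time $t$ and is correlated with $M_t$. For instance, if $A_t=\int_0^t M_s\,ds$ then $\bb{E}\crt{A_t M_t}=\int_0^t\bb{E}\crt{M_s^2}\,ds>0$; the martingale property kills $\bb{E}\crt{M_t}$, not products of $M_t$ against adapted quantities. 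The standard repair --- and what Karatzas--Shreve actually do --- is to avoid the cross term entirely via $\norm{A_t+M_t}^2\leq 2\norm{A_t}^2+2\norm{M_t}^2$ (or to bound it by Cauchy--Schwarz and absorb it into the other two terms at the cost of a constant); after that, Cauchy--Schwarz on the drift, the It\^o isometry on the martingale part, the Lipschitz bound with constant $K_n$ valid inside the ball of radius $n$, and Gronwall give $\zeta_{\cdot\wedge\tau_n}=\tilde\zeta_{\cdot\wedge\tau_n}$ exactly as you describe. The remaining points --- boundedness and continuity in $t$ of $\bb{E}\norm{\zeta_{t\wedge\tau_n}-\tilde\zeta_{t\wedge\tau_n}}^2$, the passage from fixed $t$ to all $t$ by path continuity, and $\tau_n\uparrow\infty$ by non-explosion of global solutions --- are handled correctly.
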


\medskip

For $\ell = 1$ we note that the local Lipschitz condition is not true for $\sigma$,
due to the square-root behavior near the boundary $\zeta(x) = 0$.
Labeling the sites $V$ of our graph by $\chv{x_1,\ldots, x_n}$,
we observe that the associated dispersion matrix $\sigma$ fits the particular form required by the following criterion,
which will again give pathwise uniqueness.
\begin{teoutro}[\protect{\cite[Theorem 1]{YW}}] Let
\[
d\zeta_t = \sigma(\zeta_t)\, dB_t + b(\zeta_t)\, dt
\]
where the coefficients
\[ 
\sigma(\zeta) = \begin{bmatrix}\sigma_1(\zeta(x_1)) &&& 0 \\ & \sigma_2(\zeta(x_2)) && \\ && \ddots & \\ 0 &&&\sigma_n(\zeta(x_n))\end{bmatrix}
\quad \text{and} \quad
b(\zeta) = \begin{bmatrix}b_1(\zeta) \\ b_2(\zeta) \\ \vdots \\ b_n(\zeta)\end{bmatrix}
\]
are such that
\begin{itemize}
\item[(i)] there is a positive increasing function $\rho$ on $(0,\infty)$
such that  for every $\epsilon > 0$ $\int_0^\epsilon \frac{1}{\rho^2(u)}\, du = + \infty$, and such that
\[
\abs{\sigma_i(u) - \sigma_i(v)} \leq \rho(\abs{u - v}), \quad \forall \, u,v \in \bb{R}, \quad 1 \leq i \leq n
\]
\item[(ii)] for every $n \geq 1$ there exists a constant $K_n >0$ such that for every $t \geq 0, \norm{\zeta} \leq n$
and $\lVert \tilde{\zeta} \rVert \leq n$:
\[
\norm{b_i(\zeta) - b_i(\tilde{\zeta})}  \leq K_n \norm{\zeta-\tilde{\zeta}}.
\]
Then pathwise uniqueness holds.
\end{itemize}
\end{teoutro}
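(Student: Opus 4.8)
The plan is to run the classical Yamada--Watanabe argument, exploiting crucially the diagonal form of $\sigma$. Suppose $\zeta_\cdot$ and $\tilde\zeta_\cdot$ are two weak solutions defined on the same probability space, driven by the same Brownian motion $B_\cdot = \prt{B^1_\cdot, \ldots, B^n_\cdot}$ and sharing the same initial condition. Writing $\Delta^i_t := \zeta_t(x_i) - \tilde\zeta_t(x_i)$ for the $i$-th coordinate difference, the goal is to show $\Delta^i_t = 0$ almost surely for every $i$ and every $t$. The key structural observation is that since $\sigma$ is diagonal and each $\sigma_i$ depends only on the $i$-th coordinate, the martingale part of $\Delta^i$ is driven solely by $B^i$ with diffusion coefficient $\sigma_i(\zeta_s(x_i)) - \sigma_i(\tilde\zeta_s(x_i))$, so its quadratic variation is $\int_0^t \abs{\sigma_i(\zeta_s(x_i)) - \sigma_i(\tilde\zeta_s(x_i))}^2\,ds$ with no cross terms from the other coordinates.

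The heart of the method is an approximation of $\abs{\cdot}$ adapted to the modulus $\rho$. Using that $\int_0^\epsilon \rho^{-2}(u)\,du = +\infty$, choose a strictly decreasing sequence $1 = a_0 > a_1 > \cdots \downarrow 0$ with $\int_{a_m}^{a_{m-1}} \rho^{-2}(u)\,du = m$, and build $C^2$ functions $\phi_m$ with $\phi_m(x) \to \abs{x}$ uniformly, $\abs{\phi_m'} \leq 1$, and $0 \leq \phi_m''(x) \leq \frac{2}{m\,\rho^2(\abs{x})}$ supported where $\abs{x} \in (a_m, a_{m-1})$. I would then introduce the localizing stopping times $\theta_N := \inf\chv{t > 0 ; \norm{\zeta_t} \vee \norm{\tilde\zeta_t} > N}$, so that both solutions stay in $[-N,N]^V$ up to $\theta_N$.

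Applying It\^o's formula to $\phi_m(\Delta^i_{t \wedge \theta_N})$ and taking expectations kills the $B^i$-stochastic integral and leaves two terms. The second-order term is bounded by condition (i): since $\abs{\sigma_i(\zeta_s(x_i)) - \sigma_i(\tilde\zeta_s(x_i))}^2 \leq \rho^2(\abs{\Delta^i_s})$ and $\phi_m''(\Delta^i_s) \leq 2/(m\rho^2(\abs{\Delta^i_s}))$, the integrand is at most $2/m$, so this contribution vanishes as $m \to \infty$. The first-order term is controlled using $\abs{\phi_m'} \leq 1$ and the local Lipschitz bound (ii), giving a contribution at most $K_N \bb{E}\crt{\int_0^{t \wedge \theta_N} \norm{\zeta_s - \tilde\zeta_s}\,ds}$. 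Letting $m \to \infty$ yields, for each $i$,
\[
  \bb{E}\crt{\abs{\Delta^i_{t \wedge \theta_N}}} \leq K_N \, \bb{E}\crt{\int_0^{t \wedge \theta_N} \norm{\zeta_s - \tilde\zeta_s}\,ds}.
\]
Summing over the finitely many coordinates and setting $g(t) := \bb{E}\crt{\norm{\zeta_{t \wedge \theta_N} - \tilde\zeta_{t \wedge \theta_N}}}$ produces $g(t) \leq n K_N \int_0^t g(s)\,ds$, whence Gronwall forces $g \equiv 0$. Thus $\zeta_{\cdot \wedge \theta_N} = \tilde\zeta_{\cdot \wedge \theta_N}$ almost surely, and letting $N \to \infty$ (so $\theta_N \uparrow \infty$, since neither solution explodes) gives pathwise uniqueness.

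The main obstacle is precisely the first structural observation. Were $\sigma$ not diagonal with each entry depending on a single coordinate, the quadratic variation of $\Delta^i$ would mix several coordinates and several Brownian motions, and the delicate cancellation between $\phi_m''$ and $\rho^{-2}$ that defeats the non-Lipschitz $\sqrt{\cdot}$ singularity would break down. The diagonal hypothesis is exactly what lets the one-dimensional Yamada--Watanabe estimate run independently in each coordinate before the coordinates are recombined by Gronwall.
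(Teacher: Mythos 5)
This statement is quoted in the paper as an external result, attributed to Yamada and Watanabe, and the paper supplies no proof of its own. Your argument is a correct reconstruction of the classical Yamada--Watanabe proof from that reference --- the $\phi_m$ approximation of $\abs{x}$ built from $\int_{0+}\rho^{-2}=\infty$, the cancellation of $\phi_m''$ against $\rho^2$ in the second-order It\^o term, localization by $\theta_N$ to handle the merely locally Lipschitz drift, and Gronwall --- and your observation that the diagonal form of $\sigma$, with each $\sigma_i$ depending only on the $i$-th coordinate, is precisely what lets the one-dimensional estimate run coordinatewise is exactly the point of that hypothesis.
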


Therefore, the family $\chv{\zeta^n_t;t\geq 0}_{n \in \bb{N}}$ obtained from the discrete models corresponding to the parameters $k,\ell,\beta,\alpha$ is tight and converges to the probability measure induced by the unique solution of  \eqref{eq:kl}. This proves Theorem \ref{T:1}.

\section{Open problems}
Some generalizations of the results presented here are interesting to consider.
For instance, the case where the average net increase due to birth and death of particles is positive
requires dealing with explosion times.
Also, one would like to consider the case when $V$ has infinitely many sites.
\section{Acknowledgments}

C. Costa would like to thank CNPq for the funding of his Ph.D. thesis when this work was developped
and IMPA and ULeiden for their support in finishing the manuscript

\end{document}